\documentclass[reqno, 11pt]{amsart}
\usepackage{hyperref}
\usepackage{xcolor}
\usepackage{enumitem}

\usepackage[T1]{fontenc}
\usepackage[a4paper,hmargin={2.7cm,2.7cm},vmargin={3.3cm,3.3cm}]{geometry}
\usepackage{amsmath}
\usepackage{amssymb}   
\usepackage{amsthm}    
\usepackage{thmtools}  
\usepackage{mathtools} 
\usepackage{mathrsfs}  
\usepackage{commath}
\usepackage{bbm}

\RequirePackage[backend    = biber,
                sortcites  = true,
                giveninits = true,
                doi        = false,
                isbn       = false,
                url        = false,
                maxnames   = 50,
                citestyle  = numeric-comp]{biblatex}
\DeclareNameAlias{sortname}{family-given}
\DeclareNameAlias{default}{family-given}
\usepackage{varioref}
\usepackage{hyperref}
\usepackage[nameinlink, capitalize, noabbrev]{cleveref}

\theoremstyle{plain}
\declaretheorem[numberwithin = section]{theorem}

\declaretheorem[sibling = theorem]{lemma}
\declaretheorem[sibling = theorem]{proposition}
\theoremstyle{definition}

\declaretheorem[sibling = theorem]{remark}
\declaretheorem[sibling = theorem]{example}

\def \C{ \mathbb{C} }

\def \M{ \mathcal{M} }
\def \H{ \mathcal{H} }

\def \eps{ \varepsilon }
\def \span{ \operatorname{span} }
\def \widecheck{ \widetilde }
\def \dif#1{ \,\mathrm{d}#1 }

\def \Gbracket#1#2{ \,\!_{\alpha}\!\langle #1,#2 \rangle }
\def \Hbracket#1#2{ \langle #1,#2 \rangle_{\beta} }

\title[Associativity of operator convolutions]{Associativity of operator convolutions for group actions on von Neumann algebras}

\author{Hannes Wendt}
\address{Department of Mathematics, University of Oslo, Moltke Moes vei 35, 0851 Oslo.}
\email{hhwendt@math.uio.no}

\begin{document}

\begin{abstract}
    Given a pair of commuting, ergodic, trace-preserving and trace-integrable actions of locally compact unimodular groups on a common semifinite von Neumann algebra we prove a form of associativity for convolutions of triples of operators. We view this as a basis for a general version of quantum harmonic analysis.
\end{abstract}

\maketitle
\thispagestyle{empty}

\section{Introduction}

Quantum harmonic analysis (QHA) is a framework introduced by Werner \cite{We84} to demonstrate parallels between classical harmonic analysis and quantum mechanics.
Starting from a projective representation of the phase space $G = \mathbb R^{2d}$ on $\H = L^2(\mathbb R^d)$ (or the Schrödinger representation of the Heisenberg group) a shift-action is defined for bounded operators on $\H$, which naturally leads to convolutions between functions and operators. These allowed Werner an intuitive transfer of classical results into quantum analogues.
More recently, QHA has also found fruitful connections to time-frequency analysis and wavelet theory \cite{LuSk18, LuSk19, LuSk20, LuSk21, BeBeLuSk22}.

To justify the systematic use of the convolution formalism, the compatibility between the function-operator convolutions and operator-operator convolutions is central. In particular, Werner shows for triples of trace-class operators $S,T,R$ on $\H$ that \begin{equation}
    \label{eq:werners-operator-associativity}
    (S * T) * R = S * (T * R).
\end{equation}
This associativity is especially important for his version of ``Wiener's approximation theorem'' and its applications, as e.g. for the physical distinguishability of quantum states \cite{KiLaSchuWer12}. 

Starting with the development of an affine version of QHA \cite{BeBeLuSk22}, there has been interest in generalizations based on irreducible, square-integrable representations of locally compact groups \cite{FuGa25, Ha23}. While \eqref{eq:werners-operator-associativity} has been shown in \cite{We84, LuSk18, FuGa25} where the operator-shifts are actions of abelian groups, no corresponding condition has been established in situations with more general groups. 
This work aims to explain this difference, by proposing a broadened setup, in which we find necessary and sufficient conditions for a form of \eqref{eq:werners-operator-associativity} to hold.

Our extended setup rests on a recent joint work with U. Enstad \cite{EnWe25}: There we showed for appropriate actions $\alpha\colon G\curvearrowright \M$ of locally compact groups on von Neumann algebras with semifinite trace $\tau$ that the scalar-valued functions $\Gbracket{x}{y}(s) = \tau(x \alpha_s(y^*)), s \in G$ are integrable for all trace-class elements $x$ and admissible operators $y$. As this generalises a result of Duflo and Moore \cite{DuMo1976}, it yields Moyal's identity for the operator-shift of QHA \cite[Proposition 4.3.2]{Gr01} and therefore strongly motivates us to view $\Gbracket{\,\cdot\,}{\,\cdot\,}$ as an analogue of Werner's operator-operator convolution. In the present work, we complement the setup of \cite{EnWe25} by defining compatible function-operator convolutions $f * x$.

To formulate the analogue of \eqref{eq:werners-operator-associativity}, we consider a two-sided setup with the above language: We assume a pair of commuting, $\tau$-preserving actions \[
    \alpha\colon G \curvearrowright \M, \quad 
    \beta\colon H \curvearrowright \M,
\] of locally compact groups $G$ and $H$ on a common von Neumann algebra $\M$. These define commuting left- and right- function-operator convolutions, which may be genuinely distinct already for $\M = L^\infty(G)$ over non-abelian groups $G$. 
We then show that it is impossible for both $\alpha$ and $\beta$ to satisfy the conditions of the Duflo--Moore theorem of \cite{EnWe25} when one of the groups $G$ or $H$ is non-unimodular.

On the contrary, when commuting, trace-preserving actions $\alpha$ and $\beta$ of unimodular groups do satisfy the conditions of the Duflo--Moore theorem, we may define integrable functions $\Gbracket{\,\cdot\,}{\,\cdot\,}$ and $\Hbracket{\,\cdot\,}{\,\cdot\,}$ for all pairs of trace-class operators, and we show in our main result \cref{thm:associativity} the compatibility \[
    \Gbracket{x}{y} * z = x * \Hbracket{y}{z}.
\] 
This recovers the associativity \eqref{eq:werners-operator-associativity} for convolutions of operators when applied the setting of QHA, and simultaneously the associativity of function-convolutions when considered for shift-actions on commutative $\M$.

From the bimodule structure of the function-convolutions on bounded, trace-class operators of $\M$, we deduce Morita equivalence of ideals in the reduced $C^*$-algebras over the acting groups. As a further application, and in the spirit of QHA, we show a version of Wiener's approximation theorem, as in \cite[Proposition 3.5]{We84} and \cite{KiLaSchuWer12}. As our setup does not consider Fourier transforms of operators, this amounts to equivalent formulations of regularity of operators. We apply this to prove a Tauberian theorem for bounded operators, following \cite[Sect. 5]{LuSk21}.

\subsection*{Outline}

The preliminary \cref{sect:preliminaries} recalls necessary basics on von Neumann algebras and explains the generalised Duflo--Moore theorem and its assumptions. Actually, in the \nameref{sect:appendix} we demonstrate that the assumptions can not be weakened.

In \cref{sect:convolutions} we define the function-operator convolutions first for a trace-preserving group action $\alpha$, and analogous as in \cite[Sect. III]{We84}, \cite[Sect. 4]{LuSk18}, assuming the conditions of the Duflo--Moore theorem, we extend the definition to its most generality via the duality by $\Gbracket{\,\cdot\,}{\,\cdot\,}$.

Both the impossibility of the triple convolution for non-unimodular groups, as well as our main result \cref{thm:duflo-moore}, we show in \cref{sect:associativity} for pairs of commuting, trace-preserving actions on the common semifinite von Neumann algebra.

Finally, \cref{sect:bimodule} contains the application of the associativity for Morita equivalence, while \cref{sect:regularity} and \cref{sect:tauberian} contain Wiener's approximation theorem and the Tauberian-type theorem.

\subsection*{Acknowledgements}

The author is indebted to Ulrik Enstad and Makoto Yamashita for many valuable discussions.

\section{Preliminaries}\label{sect:preliminaries}

In the following, $G$ always denotes a locally compact, second countable group, furnished with some left-invariant Haar measure $m$. We denote by $\Delta$ the modular function associated to $m$, the unique continuous homomorphism from $G$ into $\mathbb R_{ > 0}$ satisfying $m(B s) = \Delta(s) m(B)$ for all $s \in G$ and measurable subsets $B \subseteq G$. Whenever the Haar measure is bi-invariant, that is $\Delta \equiv 1$, one calls $G$ unimodular.

We also consider semifinite von Neumann algebras $\M$ with separable predual. By weights on a von Neumann algebra we understand affine maps from the positive cone $\M_+$ to $[0,\infty]$. We fix a faithful, normal, semifinite weight $\tau$ on $\M$ that satisfies the trace property $\tau(x^* x) = \tau(x x^*)$ for all $x \in \M$. This trace linearly extends to $\mathfrak m_\tau$, the ultraweakly dense ideal in $\M$ comprised of all elements $x \in \M$ satisfying $\tau(|x|) < \infty$.

By $L^p(\M)$ we denote the completion of $\mathfrak m_\tau$ with respect to the norm $\|x\|_p = \tau(|x|^p)^{1/p}$ for $x \in \mathfrak m_\tau$, $1 \leq p < \infty$. We formally put $L^\infty(\M) = \M$. The GNS-construction for $\M$ with respect to $\tau$ coincides with the Hilbert space $L^2(\M)$, which we will use when concretely representing $\M \subseteq B(L^2(\M))$ as bounded multiplication operators. We refer to \cite{Ne74} for an exposition of noncommutative integration.

The space of \emph{affiliated} operators $\widetilde \M$ is the collection of unbounded, densely defined, closed linear operators on $L^2(\M)$ that commute with the commutant $\M'$ of $\M$. 
The self-adjoint, positive $A \in \widetilde \M_+$ yield increasing sequences $A_\eps = A(1+\eps A)^{-1} \in \M_+$, and via $\tau_A(x) = \sup_{\eps > 0} \tau( (A_\eps)^{1/2} x (A_\eps)^{1/2} ), x \in \M_+$ define semifinite, normal weights $\tau_A$.

\begin{theorem}[Radon--Nikodym theorem \cite{PeTa73}]
    Let $\M$ be a von Neumann algebra with faithful, normal, semifinite trace $\tau$. For every normal, semifinite weight $\psi$ on $\M$ there exists a unique self-adjoint, positive operator $A$ affiliated to $\M$ such that $\psi = \tau_A$.
\end{theorem}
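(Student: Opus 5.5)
The plan is to derive the statement from Connes' cocycle derivative theorem, using that the modular group of a trace is trivial. For uniqueness I would also use the fact that a weight is determined by its modular group together with its cocycle relative to a reference weight.

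\emph{Reduction to faithful weights.} Let $e \in \M$ be the support projection of $\psi$. Replace $\psi$ by the faithful, normal, semifinite weight
\[
  \tilde\psi(x) = \psi(exe) + \tau\bigl((1-e)x(1-e)\bigr).
\]
If $\tilde\psi = \tau_{\tilde A}$ with $\tilde A$ commuting with $e$, then $A = \tilde A e$ gives $\psi = \tau_A$.

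\emph{Existence.} For faithful $\psi$, form Connes' cocycle $u_t = (D\psi : D\tau)_t \in \M$. It satisfies $u_{s+t} = u_s\,\sigma^\tau_s(u_t)$ and $\sigma^\psi_t = u_t\,\sigma^\tau_t(\cdot)\,u_t^*$. Since $\tau$ is a trace, $\sigma^\tau_t = \mathrm{id}$. Hence $t \mapsto u_t$ is a strongly continuous one-parameter unitary group inside $\M$.

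By Stone's theorem there is a positive nonsingular self-adjoint operator $A$ on $L^2(\M)$ with $u_t = A^{it}$. All spectral projections of $A$ are strong limits of functions of the $u_t$, so they lie in $\M$. Therefore $A$ commutes with $\M'$, i.e.\ $A \in \widetilde\M_+$.

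It then remains to show $\tau_A = \psi$. By the uniqueness part of Connes' theorem, it suffices to prove $(D\tau_A : D\tau)_t = A^{it}$. Equivalently, the balanced weight $\theta$ on $M_2(\M)$ built from $\tau$ and $\tau_A$ satisfies the KMS condition for the group
\[
  \mathrm{Ad}\begin{pmatrix}1&0\\0&A^{it}\end{pmatrix}.
\]
I would check this first for the bounded truncations $A_\eps$, or for the cut-downs $A\chi_{[1/n,n]}(A)$, where the KMS condition is a direct computation using the trace property of $\tau$. I would then pass to the limit using normality and the monotone definition of $\tau_A$ as a supremum.

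\emph{Uniqueness.} If $\tau_A = \tau_B$, then their supports agree and their cocycles relative to $\tau$ agree on that support. The cocycle computation above gives $A^{it} = B^{it}$ for all $t$, and the uniqueness in Stone's theorem yields $A = B$.

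\emph{Main obstacle.} I expect the hardest step to be verifying that the cocycle of $\tau_A$ is $A^{it}$ when $A$ is unbounded. The difficulties are semifiniteness of $\tau_A$, the domain issues in the KMS condition, and justifying the limit $\eps \to 0$. I would try to control these by cutting down with the spectral projections $\chi_{[1/n,n]}(A)$, which commute with $A$ and lie in $\M$. On each corner $A$ is bounded and invertible, and the required identities then follow from $\tau(xy) = \tau(yx)$. If this becomes too technical, the fallback is to cite Pedersen and Takesaki's original argument directly.
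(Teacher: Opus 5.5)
The paper gives no proof of this statement. It is quoted from Pedersen--Takesaki and used as a black box, so there is no internal argument to compare with.

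Your outline is structurally correct. It is the derivation of the Pedersen--Takesaki theorem from Connes' cocycle theorem found in modern treatments of modular theory, specialised to the reference weight $\tau$. Triviality of $\sigma^\tau$ turns $(D\psi:D\tau)_t$ into a one-parameter unitary group in $\M$, and Stone's theorem produces $A$ at once.

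\begin{itemize}
\item \emph{What your route buys:} a short, conceptual existence argument. It works verbatim for $\sigma^\varphi$-invariant weights relative to any faithful normal semifinite $\varphi$, with $A$ then affiliated to the centralizer.
\item \emph{What it costs:} it rests on Connes' theorem including its uniqueness half, on the KMS characterisation of modular groups, and on the behaviour of modular groups under reduction by projections in the centralizer. The standard proofs of these ingredients draw on Pedersen--Takesaki's analysis of the centralizer. So the route is not more elementary than the cited source, and all the real content sits in the identity $(D\tau_A:D\tau)_t = A^{it}$ for unbounded $A$.
\end{itemize}

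Two places need to be made precise.

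\emph{Reduction step.} You assume that $\tilde A$ commutes with $e$. This holds because $\tilde\psi(x) = \tilde\psi(exe) + \tilde\psi((1-e)x(1-e))$ puts $e$ in the centralizer of $\tilde\psi$. Hence $u_t e u_t^* = \sigma^{\tilde\psi}_t(e) = e$.

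\emph{Main step.} Drop the truncations $A_\eps$. Normality and the monotone convergence $\tau_{A_\eps}\uparrow\tau_A$ do not by themselves give convergence of the cocycles or of the KMS functions, so that limit is unjustified as stated. The cut-downs do work, but you need a gluing principle that your sketch omits:
\begin{itemize}
\item The projection $e_n = \chi_{[1/n,n]}(A)$ lies in the centralizers of both $\tau$ and $\tau_A$. For $\tau_A$ this is because $\tau_A$ splits along $e_n$, by the trace property and $[A,e_n]=0$.
\item For a projection in both centralizers, the cocycle reduces to the corner: $(D\tau_A:D\tau)_t\, e_n = (D\tau_A|_{e_n\M e_n} : D\tau|_{e_n\M e_n})_t$.
\item On $e_n\M e_n$ you have $\tau_A = \tau\bigl((Ae_n)^{1/2}\,\cdot\,(Ae_n)^{1/2}\bigr)$ with $Ae_n$ bounded and boundedly invertible. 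There your direct KMS computation from the trace property applies and gives $(D\tau_A:D\tau)_t\, e_n = A^{it}e_n$.
\item Since $A$ is nonsingular, $e_n\uparrow 1$ strongly, and $(D\tau_A:D\tau)_t = A^{it}$ follows.
\end{itemize}
The same cut-downs settle semifiniteness of $\tau_A$: $\tau_A(e_n x^*x e_n)\le n\,\tau(x^*x)$, and $xe_n\to x$ strongly.
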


We will consider group actions on von Neumann algebras:
By $\alpha\colon G \curvearrowright \M$ we denote group homomorphisms from $G$ into the group of automorphisms of $\M$. They are always assumed to be ultraweakly continuous, that is, for all $y \in \M$ and all $x \in L^1(\M)$ we assume that the bounded maps $s \mapsto \tau(x \alpha_s(y))$ are continuous on $G$. We say that \begin{enumerate}
    \item $\alpha$ is \emph{ergodic}, if the fixed-point algebra of $\alpha$ inside $\M$ equals $\mathbb C 1$.
    \item $\alpha$ is \emph{$\tau$-preserving}, if $\tau(\alpha_s(x)) = \tau(x)$ for all $x \in \M_+$ and $s \in G$.
    \item $\alpha$ is \emph{$\tau$-integrable}, if there exists a pair of nonzero elements $x,y \in \M_+$ such that the non-negative function $s \mapsto \tau(x^{1/2} \alpha_s(y) x^{1/2})$ is integrable with respect to Haar measure.
\end{enumerate}
The third notion was introduced in \cite{EnWe25}. We cite the Duflo--Moore theorem for positive elements:

\begin{theorem}[Duflo--Moore theorem \cite{EnWe25}]\label{thm:duflo-moore}
    Let $\alpha\colon G \curvearrowright \M$ be an action of a locally compact group on a von Neumann algebra with semifinite trace $\tau$. If $\alpha$ is $\tau$-preserving, ergodic and $\tau$-integrable, then there exists a unique, densely defined, self-adjoint, positive, invertible operator $D$ affiliated to $\M$ such that \begin{equation}\label{eqn:duflo-moore}
        \int_G \tau(x \alpha_s(y)) \dif m(s) = \tau(x) \tau_{D^{-1}}(y), \quad x \in L^1(\M)_+, y \in \M_+.
    \end{equation} Furthermore, $G$ is unimodular if and only if $D$ is a scalar multiple of the identity.
\end{theorem}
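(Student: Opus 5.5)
The plan is to study, for fixed $y \in \M_+$, the map $x \mapsto \int_G \tau(x\alpha_s(y)) \dif m(s)$ and then, by symmetry, the map in $y$ for fixed $x$. Ergodicity will force the first to be a multiple of $\tau$. The second will be a normal weight, and the Radon--Nikodym theorem of \cite{PeTa73} will represent it as $\tau_{D^{-1}}$.

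\textbf{Step 1: the weight in $x$ is invariant.} For $y \in \M_+$, set $\varphi_y(x) = \int_G \tau(x^{1/2}\alpha_s(y)x^{1/2}) \dif m(s)$ on $\M_+$.
\begin{itemize}
\item By monotone convergence and normality of $\tau$, $\varphi_y$ is a normal weight.
\item Since $\tau$ is $\alpha$-invariant and $m$ is left invariant, $\varphi_y(\alpha_t(x)) = \int_G \tau(x\,\alpha_{t^{-1}s}(y))\dif m(s) = \varphi_y(x)$.
\end{itemize}

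\textbf{Step 2: the weight in $x$ is a multiple of $\tau$.} I would first use $\tau$-integrability to show that $\varphi_y$ is semifinite for a nonzero $y$. The integrability condition gives $\varphi_{y_0}(x_0) < \infty$ for some nonzero $x_0, y_0$. Invariance gives $\varphi_{y_0}(\alpha_t(x_0)) < \infty$ for all $t$. Ergodicity means the support projections of the $\alpha_t(x_0)$ generate the identity, so their span generates a dense ideal.
\begin{itemize}
\item The Radon--Nikodym theorem then gives $\varphi_{y} = \tau_{A_y}$ with $A_y$ affiliated.
\item Invariance of $\varphi_y$ and of $\tau$ makes $A_y$ $\alpha$-invariant: its spectral projections lie in the fixed-point algebra.
\item Ergodicity then forces $A_y = c(y)\cdot 1$ with $c(y) \in [0,\infty]$.
\end{itemize}
This yields $\int \tau(x\alpha_s(y))\dif m = \tau(x)c(y)$.

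\textbf{Step 3: the functional $c$ is a normal semifinite weight.} Writing $\tau(x\alpha_s(y)) = \tau(\alpha_{s^{-1}}(x) y)$, we get $c(y) = \tau(x)^{-1}\int \tau(\alpha_{s^{-1}}(x)y)\dif m(s)$ for any fixed $x$ with $0<\tau(x)<\infty$.
\begin{itemize}
\item This shows $c$ is additive and normal.
\item Its finiteness domain is an $\alpha$-invariant hereditary cone, nonzero by integrability, hence ultraweakly dense by ergodicity. So $c$ is semifinite.
\item Faithfulness: $c(y) = 0$ for $y \neq 0$ would make $\tau(x\alpha_s(y)) = 0$ a.e., contradicting faithfulness of $\tau$.
\end{itemize}
Radon--Nikodym gives $c = \tau_B$ with $B$ positive, affiliated and injective. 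Set $D = B^{-1}$.

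\textbf{Step 4: $D$ has dense range.} This needs $B$ to be injective and non-degenerate. A finite-support argument shows $\tau_B$ is finite on a dense set, so $B$ is densely defined. Uniqueness of $D$ follows from uniqueness in Radon--Nikodym.

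\textbf{Step 5: relative invariance and unimodularity.} Compute $c(\alpha_t(y))$ by substituting $s \mapsto st^{-1}$:
\[
    c(\alpha_t(y)) = \Delta(t)^{-1} c(y).
\]
Hence $\tau_B\circ\alpha_t = \Delta(t)^{-1}\tau_B$, which translates to $\alpha_t(B) = \Delta(t)^{-1}B$.
\begin{itemize}
\item If $G$ is unimodular, $B$ is $\alpha$-invariant, so ergodicity makes it scalar.
\item If $B$ is a scalar multiple of $1$, then $\Delta \equiv 1$.
\end{itemize}

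The main obstacle is the semifiniteness step, together with handling $c(y) = \infty$ cases rigorously. I would try first to produce, from a single integrable pair, a dense invariant family via ergodicity: the supremum of the support projections of $\alpha_t(x_0)$ is an invariant projection, hence equal to $1$.
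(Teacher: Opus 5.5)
The paper does not prove this theorem; it is quoted from \cite{EnWe25}, so there is no in-paper argument to compare with. Judged on its own, your outline is a valid proof once one step in Step 2 is filled in. Its engine is the paper's own device: uniqueness in the Pedersen--Takesaki Radon--Nikodym theorem, ergodicity forcing an $\alpha$-invariant derivative to be scalar, and the relative invariance $c\circ\alpha_t=\Delta(t)^{-1}c$. The paper uses the same mechanism in the converse proposition of the appendix and in the proof that commuting ergodic, $\tau$-integrable actions force unimodularity.

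The step that is not justified as written is in Step 2. You prove semifiniteness only for the particular $y_0$ supplied by $\tau$-integrability, but then apply the Radon--Nikodym theorem to $\varphi_y$ for arbitrary $y\in\M_+$. When $c(y)=\infty$, $\varphi_y$ is not semifinite; this is your ``main obstacle''. It is resolved by running your ergodicity device for every $y$, phrased via left ideals. The set $\mathfrak n_{\varphi_y}=\{x\in\M:\varphi_y(x^*x)<\infty\}$ is an $\alpha$-invariant left ideal. Its $\sigma$-weak closure is therefore $\M e$ for a projection $e$ with $\alpha_t(e)=e$ for all $t$, so $e\in\{0,1\}$. If $e=1$, then $\varphi_y$ is semifinite and your argument gives $\varphi_y=c(y)\tau$ with $c(y)\in[0,\infty)$. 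If $e=0$, then $\mathfrak n_{\varphi_y}=\{0\}$, so $\varphi_y(x)=\infty$ for every nonzero $x\in\M_+$; this is the case $c(y)=\infty$. Your support-projection argument is the special case $y=y_0$: the closure of the left ideal generated by the $\alpha_t(x_0)^{1/2}$ is $\M\bigvee_t s(\alpha_t(x_0))=\M$. The same dichotomy gives semifiniteness of $c$ in Step 3. For that you must do the Step 5 computation $c(\alpha_t(y))=\Delta(t)^{-1}c(y)$ first, since it is what makes $\{c<\infty\}$ $\alpha$-invariant.

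Smaller points:
\begin{itemize}
\item Normality of $\varphi_y$ and of $c$ is not a plain monotone convergence argument, because normality concerns increasing nets. The integrands $s\mapsto\tau(x\alpha_s(y_i))$ are continuous for $x\in L^1(\M)_+$. For $x\in\M_+$ they are lower semicontinuous: write $\tau$ as a sum of normal positive functionals. Integrals of increasing nets of nonnegative lower semicontinuous functions against a Radon measure do converge, which gives normality.
\item Since $\tau_B\circ\alpha_t=\tau_{\alpha_{t^{-1}}(B)}$, you get $\alpha_t(B)=\Delta(t)B$, not $\Delta(t)^{-1}B$. This does not affect the unimodularity conclusions.
\item Step 4 is redundant. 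The Radon--Nikodym derivative of a semifinite normal weight is already densely defined, and faithfulness of $c$ makes $B$ injective. Hence $D=B^{-1}$ is densely defined with dense range.
\item The identity is asserted for possibly unbounded $x\in L^1(\M)_+$. Pass from $\M_+\cap L^1(\M)$ via the truncations $x\,1_{[0,n]}(x)\uparrow x$ and sequential monotone convergence on both sides.
\end{itemize}
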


Having fixed an action $\alpha$ as in the theorem, all elements $y \in \M_+$ with $\tau_{D^{-1}}(y) < \infty$ are called the \emph{positive admissible elements of $\M$}. An \emph{admissible element of $\M$} is a linear combinations of positive admissible elements of $\M$. The Banach-space completion of the admissible elements of $\M$ with respect to the norm $\|y\|_{1, D^{-1}} = \lim_{\epsilon \to 0} \|(D^{-1})^{1/2}_\epsilon y (D^{-1})^{1/2}_\epsilon\|_1$ is denoted $L^1(\M, D^{-1})$, called the set of \emph{admissible elements}, and there is a sesquilinear map \[
    \Gbracket{\,\cdot\,}{\,\cdot\,}\colon L^1(\M) \times L^1(\M, D^{-1}) \to L^1(G, m),
\] such that $\Gbracket{x}{y}$ and $s \mapsto \tau(x \alpha_s(y^*))$ coincide as elements of $L^1(G, m)$ for all $x \in L^1(\M)$ and all admissible $y \in \M$. In fact, for $x \in L^1(\M)$ and admissible $y \in \M$ we have that \[
    \|\Gbracket{x}{y}\|_1 \leq \|x\|_1 \|y\|_{1, D^{-1}}, \quad \int_G \Gbracket{x}{y} \dif m = \tau(x) \overline{\tau_{D^{-1}}(y)}.
\]

\section{Main results}
\subsection{Convolutions}\label{sect:convolutions}

Let $\alpha \colon G \curvearrowright \M$ be a $\tau$-preserving action on $\M$. We will first define function-operator convolutions \[
    f * y \in L^p(\M), \quad y \in L^p(\M), f \in L^1(G, m),
\] where $1 \leq p \leq \infty$. For $\tau$-preserving actions $\alpha$ of unimodular groups $G$ which are additionally ergodic and $\tau$-integrable, we will afterwards extend the definition to \[
    f * y \in L^r(\M), \quad y \in L^p(\M), f \in L^q(G, m),
\] where $1 \leq p,q \leq r \leq \infty$ are such that $1/p + 1/q = 1 + 1/r$.

\begin{remark}
    Given $f \in L^1(G, m)$ and $y \in L^1(\M)$ one may appeal to \cite[Theorem A3.3]{Fo95} to define $f * y$ as the unique element of the Banach space $L^1(\M)$ that satisfies \[
        \phi(f * y) = \int_G f(s) \phi(\alpha_s(y)) \dif m(s)
    \] for all continuous, linear functionals $\phi$ on $L^1(\M)$. This approach is taken in \cite[Sect. 2.3]{LuSk18}. In their concrete situation, with $\M = B(\H), \tau = \operatorname{Tr}$, it happens that $L^1(\M) \subseteq \M$. Given a general von Neumann algebra $\M$, we want to first define convolutions with $\mathfrak m_\tau = L^1(\M) \cap \M$.
\end{remark}

Recall that we represent $\M$ as multiplication operators on $L^2(\M)$. Let $y \in \M$. By the ultraweak continuity of $\alpha$, for all $\xi,\eta \in L^2(\M)$ the bounded function $s \mapsto \tau(\eta^* \alpha_s(y) \xi)$ on $G$ is continuous. So, given $f \in L^1(G, m)$, we define \[
    \langle (f * y) \xi, \eta \rangle_{L^2(\M)} = \int_G f(s) \tau(\eta^* \alpha_s(y) \xi) \dif m(s).
\] This defines a bounded, linear operator $f * y$ on $L^2(\M)$ with $\| f * y \| \leq \|f\|_1 \|y\|_\infty$. Moreover, for all bounded operators $A$ on $L^2(\M)$ that commute with $\M$, we find that \[
    \langle (f * y) A \xi, \eta \rangle 
    = \int_G f(s) \tau(\eta^* \alpha_s(y) A \xi) \dif m(s)
    = \int_G f(s) \tau(\eta^* A \alpha_s(y) \xi) \dif m(s)
    = \langle (f * y) \xi, A^* \eta \rangle.
\] This implies $f * y \in (\M')' = \M$. We note that $(f * y)^* = \overline{f} * y^*$ for $f \in L^1(G)$ and $y \in \M$.
Furthermore, for all $x \in \mathfrak m_\tau$ we find $x = \xi \eta^*$ for some pair $\xi,\eta \in \M \cap L^2(\M)$, so that by the trace-property it holds that $\tau(\eta^* \alpha_s(y) \xi) = \tau(x \alpha_s(y))$ for all $s \in G$. Consequently, we have
\begin{equation}\label{eq:weak-function-operator-definition}
    \tau(x (f*y)) = \int_G f(s) \tau(x \alpha_s(y)) \dif m(s).
\end{equation}

\begin{lemma}
    Let $f \in L^1(G, m)$. If $y \in \mathfrak m_\tau$, then $f * y \in \mathfrak m_\tau$, and for all $1 \leq p \leq \infty$ it holds \[
        \|f * y\|_p \leq \|f\|_1 \|y\|_p.
    \]
    In particular, the linear map $y \mapsto f * y$ continuously extends to a map on $L^p(\M)$.
\end{lemma}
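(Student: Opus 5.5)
The plan is to prove the $L^p$ bounds by duality from the weak formula \eqref{eq:weak-function-operator-definition}. The trace-preservation of $\alpha$ gives $\|\alpha_s(y)\|_p = \|y\|_p$ for every $s \in G$ and $1 \leq p \leq \infty$.

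\emph{The bound in norm.} Fix $y \in \mathfrak m_\tau$ and $1 \leq p \leq \infty$, and let $p'$ be the conjugate exponent. For $x \in \mathfrak m_\tau$, \eqref{eq:weak-function-operator-definition} and Hölder's inequality for $\tau$ give
\[
    |\tau(x(f*y))| \leq \int_G |f(s)|\, |\tau(x\alpha_s(y))| \dif m(s) \leq \|f\|_1 \|x\|_{p'} \|y\|_p .
\]
I will then invoke the standard noncommutative duality: for an element $z \in \M$,
\[
    \tau(|z|^p)^{1/p} = \sup\{ |\tau(xz)| : x \in \mathfrak m_\tau,\ \|x\|_{p'} \leq 1 \},
\]
with the value $+\infty$ allowed. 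For $p = \infty$ this says that $\mathfrak m_\tau$ is dense in the predual. For $p < \infty$ one tests against $x = e\,|z|^{p-1}u^*$, with $e$ running through $\tau$-finite spectral projections and $z = u|z|$ the polar decomposition. Applying this to $z = f*y \in \M$ yields $\tau(|f*y|^p) \leq (\|f\|_1\|y\|_p)^p < \infty$.

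\emph{Membership in $\mathfrak m_\tau$.} Taking $p = 1$ gives $\tau(|f*y|) < \infty$, that is, $f*y \in \mathfrak m_\tau$.

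\emph{Extension.} The map $y \mapsto f*y$ is linear on $\mathfrak m_\tau$, which is dense in $L^p(\M)$ for $p < \infty$, so it extends continuously by the bound. For $p = \infty$ the map is already defined on all of $\M$, with $\|f*y\| \leq \|f\|_1\|y\|_\infty$ as noted earlier.

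The main obstacle is the duality step for an element $z$ not yet known to lie in $L^p$. The cutoff argument handles it: with $e_n$ increasing $\tau$-finite spectral projections of $|z|$, set $x_n = e_n|z|^{p-1}u^*$. Then
\[
    \tau(x_n z) = \tau(e_n|z|^p), \qquad \|x_n\|_{p'} = \tau(e_n|z|^p)^{1/p'} .
\]
This gives $\tau(e_n|z|^p)^{1/p} \leq \|f\|_1\|y\|_p$, and normality of $\tau$ passes the bound to the limit as $e_n \uparrow 1$.

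As a fallback, I can argue by convexity instead. The $\|\cdot\|_p$-ball of radius $\|y\|_p$ in $L^p(\M)$ is ultraweakly closed in $\M$ by lower semicontinuity, and $f*y$ is an $L^1$-weighted average of the points $\alpha_s(y)$ in that ball. Hence $f*y$ lies in $\|f\|_1$ times the ball.
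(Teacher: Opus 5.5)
\textbf{Gap: the cutoff argument is circular.} Your Hölder estimate $|\tau(x(f*y))|\le\|f\|_1\|x\|_{p'}\|y\|_p$ for $x\in\mathfrak m_\tau$ is correct. The problem is the step you yourself single out as the main obstacle: your cutoff does not handle it, because it presupposes what it is meant to prove.

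The identity $\|x_n\|_{p'}=\tau(e_n|z|^p)^{1/p'}$ for $x_n=e_n|z|^{p-1}u^*$ uses that $e_n$ commutes with $|z|$. You assume $|z|$ has $\tau$-finite spectral projections $e_n\uparrow 1$. For a general $z\in\M$ there need be none:
\begin{itemize}
\item for $z=1$ with $\tau(1)=\infty$, the only spectral projections are $0$ and $1$;
\item for $z$ the multiplication by the coordinate function on $L^2[0,1]$ inside $\M=B(L^2[0,1])$, every nonzero projection commuting with $|z|$ has infinite trace.
\end{itemize}
For $z=f*y$, finiteness of $\tau(\chi_{[\delta,\infty)}(|z|))$ is a consequence of the integrability you are trying to establish. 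So, as written, the argument proves neither the $L^p$ bound nor the membership $f*y\in\mathfrak m_\tau$, which you derive from the case $p=1$ of the same argument.

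\textbf{Repair: reverse the order, as the paper does.} First treat $p=1$, where $p'=\infty$.
\begin{itemize}
\item Take $x_i=p_iu^*$ with $(p_i)$ any net of $\tau$-finite projections increasing to $1$. Then $\|x_i\|_\infty\le 1$ with no commutation needed.
\item Also $\tau(x_iz)=\tau(p_i|z|)\uparrow\tau(|z|)$ by normality.
\item This gives $f*y\in\mathfrak m_\tau$ with $\|f*y\|_1\le\|f\|_1\|y\|_1$.
\end{itemize}
Only then are the spectral projections $e_n=\chi_{[1/n,\infty)}(|z|)$ $\tau$-finite, since $\tau(e_n)\le n\,\tau(|z|)$. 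They increase to the support projection of $|z|$ rather than to $1$, but that suffices for the normality step. Your cutoff for $1<p<\infty$ then goes through verbatim.

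In this repaired form, your treatment of $1<p<\infty$ is slightly more careful than the paper's. The paper tests directly against $|f*y|^{p-1}u^*$, which lies in $L^{p'}(\M)\cap\M$ but need not lie in $\mathfrak m_\tau$ when $1<p<2$.

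\textbf{The fallback.} Your convexity argument is fine if you take the $\sigma$-weak lower semicontinuity of $\|\cdot\|_p$ on $\M$ as known. However, that fact is essentially equivalent to the duality formula you were trying to justify, so it does not give an independent proof of it.
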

\begin{proof}
    Combining \eqref{eq:weak-function-operator-definition} with $|\tau(x \alpha_s(y))| \leq \|x\|_\infty \|y\|_1$ for all $s \in G$ and $x \in \mathfrak m_\tau$, we find that \[
        |\tau(x (f*y))| = \left| \int_G f(g) \tau(x \alpha_s(y)) \dif m(s) \right| \leq \int_G |f(s)| |\tau(x \alpha_s(y))| \dif m(s) \leq \|f\|_1 \|x\|_\infty \|y\|_1.
    \]
    Consider the polar decomposition $f * y = u |f * y|$ with partial isometry $u \in \M$ and the positive element $|f * y| \in \M_+$. Using that $\tau$ is semifinite, we may pick a net of projections $p_i \in \M_+$ with $\tau(p_i) < \infty$ and least upper bound $1 \in \M_+$. Then consider $x_i = p_i u^* \in \M$. It holds that $\|x_i\|_\infty \leq 1$, so that from the above bound it follows \[
        0 \leq \tau(p_i |f * y|)
        = \tau(x_i (f * y))
        \leq \|f\|_1 \|y\|_1.
    \] Taking the supremum over the net $(p_i)_i$, this implies $f * y \in \mathfrak m_\tau$ with $\tau(|f * y|) \leq \|f\|_1 \|y\|_1$.
    
    The claimed norm inequality only remains to be shown for $1 < p < \infty$. Assuming $f * y \neq 0$, we may consider $x = \|f * y\|_p^{1-p} |f * y|^{p-1} u^* \in \mathfrak m_\tau$. Then $\|x\|_q \leq 1$, where $1 < q < \infty$ is conjugate such that $1/p + 1/q = 1$. Then by Hölder's inequality we have $|\tau(x \alpha_s(y))| \leq \|y\|_p$ for all $s \in G$. We again use the triangle inequality on \eqref{eq:weak-function-operator-definition} to deduce \[
        \|f * y\|_p = \|f * y\|_p^{1-p} \|f * y\|_p^p 
        = \tau(x (f*y)) 
        \leq \int_G |f(s)| |\tau(x \alpha_s(y))| \dif m(s)
        \leq \|f\|_1 \|y\|_p.
    \]
\end{proof}

Denote $(\lambda_t f)(s) = f(t^{-1} s)$ for $s,t \in G$ and $f \in L^1(G, m)$. One deduces from the $\tau$-invariance of $\alpha$ and left-invariance of the Haar measure that \[
    \alpha_t(f * y) = (\lambda_t f) * y, \quad t \in G, y \in \mathfrak m_\tau.
\] It follows that the function-operator convolutions makes $L^p(\M)$ a left $L^1(G, m)$-module, as \[
    g * (f * y) = (g * f) * y, \quad f,g \in L^1(G, m), y \in L^p(\M).
\]
We will later use the language of \cite{EnWe25} and denote the sesquilinear map \[
    \Gbracket{\,\cdot\,}{\,\cdot\,}\colon \mathfrak m_\tau \times \mathfrak m_\tau \to C_b(G), \quad \Gbracket{x}{y}(s) = \tau(x \alpha_s(y^*)), s \in G.
\]
By using the $\tau$-invariance of $\alpha$ and applying \eqref{eq:weak-function-operator-definition}, it follows that \begin{equation}\label{eq:convolution-bracket-compatibility}
    f * \Gbracket{x}{y} = \Gbracket{f * x}{y}, \quad f \in L^1(G), x,y \in \mathfrak m_\tau.
\end{equation}

For the rest of this subsection assume that $G$ is a unimodular group, and that $\alpha\colon G \curvearrowright \M$ is a $\tau$-preserving, ergodic and $\tau$-integrable action. We let $d > 0$ be the positive constant such that $D = d 1$ is the bounded Duflo--Moore operator for the action $\alpha$, cf. \cref{thm:duflo-moore}.

Let $y \in L^p(\M)$ and $1/p + 1/q = 1 + 1/r$ for $1 \leq p,q \leq r \leq \infty$. Whenever $q = 1$, the function-operator convolutions have already been defined above. Otherwise, let $r', q' \geq 1$ be such that $1/r + 1/r' = 1$ and $1/q + 1/q' = 1$. One checks $1/r' + 1/p = 1 + 1/q'$, so that by a result of \cite{EnWe25}
$\Gbracket{\,\cdot\,}{\,\cdot\,}\colon \mathfrak m_\tau \times \mathfrak m_\tau \to L^1(G) \cap L^\infty(G)$
extends to a sesquilinear map $L^{r'}(\M) \times L^p(\M) \to L^{q'}(G)$ and satisfies the bound \begin{equation}\label{eq:bracket-continuous}
    d^{1/q'} \, \|\Gbracket{x}{y}\|_{q'} \leq \|x\|_{r'} \|y\|_{p}.
\end{equation} Since $r, q > 1$, the bounded, linear map $
    T_y \colon L^{r'}(\M) \to L^{q'}(G), T_y(x) = \Gbracket{x}{y}
$
has a Banach-space adjoint map $
    T_y^*\colon L^q(G) \to L^r(\M). 
$ 
We define $f * y = T_y^*(f), f \in L^q(G)$. Then \[
    d^{1/q'} \, \|f * y\|_r \leq \|f\|_q \|y\|_p.
\]
As before, we have an equality of the form \eqref{eq:weak-function-operator-definition}: For all $x,y \in \mathfrak m_\tau$ it holds \[
    \tau(x (f*y)) = \langle T_y^*(f), x^* \rangle 
    = \langle f, T_y(x^*) \rangle 
    = \int_G f(s) \overline{T_y(x^*)(s)} \dif m(s) 
    = \int_G f(s) \tau(x \alpha_s(y)) \dif m(s).
\] 
As can be seen in \cite[Theorem 4.7]{LuSk18}, \cite[Proposition 4.14]{Ha23} when $\M = B(\H)$ and $\alpha_s(x) = \pi_s^* x \pi_s$ for some irreducible, square-integrable unitary representations $\pi$ of $G$, the above is equivalent to definitions of function-operator convolutions of quantum harmonic analysis.

We record for later, that the integrated action $\alpha$, viewed as representation of $G$ on $L^2(\M)$, is weakly contained in the left-regular representation $\lambda$ of $G$ on $L^2(G)$:

\begin{lemma}\label{lemma:weak-containment}
    Let $G$ be a unimodular group, and let $\alpha \colon G \curvearrowright \M$ be a $\tau$-preserving, ergodic and $\tau$-integrable action. Then it holds $\|f * x\|_2 \leq \|\lambda(f)\|_{B(L^2(G))} \|x\|_2$ for all $f \in L^1(G), x \in \mathfrak m_\tau$.
\end{lemma}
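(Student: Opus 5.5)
The plan is to transport the problem to $L^2(G)$ through the coefficient map $x \mapsto \Gbracket{x}{y}$ for a fixed nonzero $y \in \mathfrak m_\tau$. There the convolution by $f$ becomes the left-regular operator $\lambda(f)$, and the estimate can be read off.

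\emph{Step 1 (orthogonality).} Fix $0 \neq y \in \mathfrak m_\tau$ and define $V_y \colon \mathfrak m_\tau \to L^2(G)$ by $V_y(x) = \Gbracket{x}{y}$. This lands in $L^2(G)$ by \eqref{eq:bracket-continuous} with $r' = p = q' = 2$. I need the Duflo--Moore orthogonality relation (Moyal's identity) in the unimodular case:
\[
    \|\Gbracket{x}{y}\|_2^2 = d^{-1}\, \|x\|_2^2\, \|y\|_2^2, \qquad x,y \in \mathfrak m_\tau .
\]
Note that \eqref{eq:bracket-continuous} only gives the inequality "$\leq$", so the equality itself has to be supplied.

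\emph{Step 2 (intertwining).} By \eqref{eq:convolution-bracket-compatibility}, for $f \in L^1(G)$ and $x \in \mathfrak m_\tau$ we have
\[
    V_y(f * x) = \Gbracket{f*x}{y} = f * \Gbracket{x}{y} = \lambda(f)\, V_y(x),
\]
using that $f*x \in \mathfrak m_\tau$ by the lemma on convolutions. The middle convolution is the ordinary convolution on $G$, which is exactly the integrated left-regular representation acting on $L^2(G)$.

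\emph{Step 3 (conclusion).} Combining Steps 1 and 2 gives
\[
    d^{-1/2}\|y\|_2\,\|f*x\|_2 = \|V_y(f*x)\|_2 = \|\lambda(f) V_y(x)\|_2 \leq \|\lambda(f)\|\, d^{-1/2}\|y\|_2\,\|x\|_2 .
\]
Dividing by $d^{-1/2}\|y\|_2 > 0$ yields the claim.

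\emph{Main obstacle.} The only nontrivial ingredient is the equality in Step 1. I would first try to quote it from the orthogonality relations of \cite{EnWe25}, of which the introduction notes that Moyal's identity is a consequence. Failing that, I would derive it from \eqref{eqn:duflo-moore} with $D = d1$. By the polarisation identity it suffices to compute, for positive elements, the quantity
\[
    \int_G \tau(x_1 \alpha_s(y_1))\,\overline{\tau(x_2\alpha_s(y_2))}\dif m(s).
\]
To do this I would pass to the diagonal action $\alpha \otimes \alpha$ on $\M \bar\otimes \M^{\mathrm{op}}$ and check, via Fubini, that this reproduces $d^{-1}\tau(x_1x_2^*)\,\tau(y_2^*y_1)$. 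This step carries the real content and would be the one I expect to be most delicate.
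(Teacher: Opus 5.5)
Your Step 2 is correct (the paper uses the same intertwining relation \eqref{eq:convolution-bracket-compatibility}). Step 1, however, is false, and Step 3 collapses with it.

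There is no identity $\|\Gbracket{x}{y}\|_2^2=d^{-1}\|x\|_2^2\|y\|_2^2$. There is not even a bound $\|\Gbracket{x}{y}\|_{L^2(G)}\le C\|x\|_2\|y\|_2$. In \eqref{eq:bracket-continuous} the exponents must satisfy $1/r'+1/p=1+1/q'$, so $r'=p=2$ forces $q'=\infty$, and one only has the Cauchy--Schwarz bound $|\Gbracket{x}{y}(s)|\le\|x\|_2\|y\|_2$.

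Concretely, take \cref{ex:commutative-example} with $G=\mathbb R$ and $\alpha$ the translation action on $L^\infty(\mathbb R)$. This action is $\tau$-preserving, ergodic and $\tau$-integrable with $d=1$, and $\Gbracket{x}{y}=x*\tilde y$, so $\|\Gbracket{x}{y}\|_2=\|\widehat x\,\overline{\widehat y}\|_2$ by Plancherel. Nonzero Schwartz functions $x,y$ with disjointly supported Fourier transforms give $\Gbracket{x}{y}=0$, so $V_y$ is not even injective. Taking $x=y$ with $\|x\|_2=1$ and $\widehat x$ concentrated on an interval of length $\eps$ gives $\|\Gbracket{x}{x}\|_2\sim\eps^{-1/2}$.

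The ``Moyal identity'' mentioned in the introduction is the $L^1$-level relation \eqref{eqn:duflo-moore} specialised to rank-one projections in $B(\mathcal H)$. It is not an isometry $L^2(\M)\to L^2(G)$. Ergodicity of $\alpha$ on $\M$ does not make the unitary representation on $L^2(\M)$ irreducible, so the classical vector-level orthogonality relations are unavailable. Your fallback via $\alpha\otimes\alpha$ on $\M\bar\otimes\M^{\mathrm{op}}$ fails too. That diagonal action is not ergodic (in the commutative case every function of $s^{-1}t$ is invariant), so \cref{thm:duflo-moore} does not apply to it, and the target identity is false anyway.

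The missing idea is to intertwine with a vector of $L^2(G)$ adapted to $x$, rather than through a fixed coefficient map. The paper takes the diagonal coefficient $\phi=\Gbracket{x}{x}$. It is continuous and positive definite, since $\sum_{i,j}c_i\overline{c_j}\phi(s_i^{-1}s_j)=\tau\bigl(|\sum_i c_i\alpha_{s_i}(x)|^2\bigr)\ge 0$. By the Duflo--Moore theorem it lies in $L^1(G)\cap L^\infty(G)\subseteq L^2(G)$. Godement's theorem then yields $g\in L^2(G)$ with $\phi(s)=\int_G g(t)\overline{g(s^{-1}t)}\dif t$, and in particular $\|g\|_2^2=\phi(e)=\|x\|_2^2$. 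Combining this with your Step 2 for $y=x$ gives
\[
    \|f*x\|_2^2=\int_G\overline{f(s)}\,\Gbracket{f*x}{x}(s)\dif s=\int_G\overline{f(s)}\,(f*\phi)(s)\dif s=\|f*g\|_2^2 ,
\]
so $\|f*x\|_2=\|\lambda(f)g\|_2\le\|\lambda(f)\|\,\|x\|_2$.

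Equivalently, $\alpha_s(x)\mapsto\lambda_s g$ extends to an isometric intertwiner from the cyclic subspace of $x$ into $L^2(G)$. Square integrability of the \emph{diagonal} coefficient is exactly what the Duflo--Moore theorem provides, and it suffices. The $L^2$-isometry of off-diagonal coefficient maps that your argument needs does not hold.
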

\begin{proof}
    Let $x \in \mathfrak m_\tau$. The continuous, bounded function $\phi = \Gbracket{x}{x} \in C_b(G)$ is integrable by the Duflo--Moore theorem. Furthermore, $\phi$ is positive definite: For all $n \in \mathbb N$ and all choices of $c_1, \dots, c_n \in \mathbb \C$, $s_1, \dots, s_n \in G$ it holds 
    \[
        \sum_{i,j = 1}^n c_i \overline{c_j} \phi(s_i^{-1} s_j)
        = \sum_{i,j = 1}^n c_i \overline{c_j} \tau(\alpha_{s_i}(x) \alpha_{s_j}(x)^*)
        = \tau(\Big| \sum_{i = 1}^n c_i\alpha_{s_i}(x) \Big|^2) \geq 0.
    \] By a theorem of Godement \cite{Go48} (see also \cite[Theorem 13.8.6]{Di64}) there exists $g \in L^2(G)$ such that for all $s \in G$ it holds $\phi(s) = \int_G g(t) \overline{g(s^{-1} t)} \dif t$. Now for $f \in L^1(G)$ with \eqref{eq:weak-function-operator-definition} and \eqref{eq:convolution-bracket-compatibility} we compute: \[
        \|f * x\|_2^2 
        = \tau( (f * x) (f * x)^* )
        = \int_G \overline{f(s)} \Gbracket{f * x}{x}(s) \dif s
        = \int_G \overline{f(s)} (f * \phi)(s) \dif s
        = \| f * g \|_2^2.
    \]
    Thus with $\|x\|_2^2 = \phi(e) = \|g\|_2^2$ it follows $\|f * x\|_2 = \|\lambda(f) g\|_2 \leq \|\lambda(f)\| \|g\|_2 = \|\lambda(f)\| \|x\|_2$.
\end{proof}

\subsection{Commuting actions}\label{sect:associativity}

In this subsection, we consider a pair of $\tau$-preserving actions of locally compact, second countable groups $G$ and $H$ on a common von Neumann algebra $\M$ \[
    G \stackrel\alpha\curvearrowright \M, \quad 
    H \stackrel\beta\curvearrowright \M,
\] which \emph{commute}, in the sense that for all $s \in G, t \in H$ and $x \in \M$ it holds that \[
    \alpha_s(\beta_t(x)) = \beta_t(\alpha_s(x)).
\] 

Denote by $m_G$ and $m_H$ some left-invariant Haar measures of $G$ and $H$ respectively. We use the construction of \cref{sect:convolutions} to define left- and right-convolutions on $L^p(\M)$. As we choose to write both $\alpha,\beta$ as left-actions, for the right-convolutions we denote $f \in L^1(H, \widecheck m_H)$ whenever we want to consider an (equivalence class of a) measurable function $f$ on $H$ such that $\int_H |f(t^{-1})| \dif m_H(t) < \infty$. We recall that this is equivalent to $\Delta_H^{-1} f \in L^1(H, m_H)$.

Both $L^1(G, m_G)$ and $L^1(H, \widecheck m_H)$ are $*$-algebras under convolution, with involutions defined for $f_1 \in L^1(G, m_G), f_2 \in L^1(H, \widecheck m_H)$ by $f_1^*(s) = \Delta^{-1}(s) \overline{f_1(s^{-1})}$ and $f_2^*(t^{-1}) = \Delta^{-1}(t) \overline{f_2(t)}$.

Let now $y \in L^p(\M)$ with $1 \leq p \leq \infty$. Then, we have weakly defined elements of $L^p(\M)$: \[
    f_1 * y = \int_G f_1(s) \alpha_s(y) \dif m_G(s),
    \quad
    y * f_2 = \int_H f_2(t^{-1}) \beta_t(y) \dif m_H(t),
\]
for all $f_1 \in L^1(G, m_G)$ and $f_2 \in L^1(H, \widecheck m_H)$.
We emphasize that as $\alpha$ and $\beta$ are assumed to commute, it holds that \begin{equation*}
    (f_1 * y) * f_2 = f_1 * (y * f_2).
\end{equation*}

We follow the notation of \cite{EnWe25} and consider sesquilinear maps $\Gbracket{\,\cdot\,}{\,\cdot\,}$ and $\Hbracket{\,\cdot\,}{\,\cdot\,}$, defined for all $x,z \in L^p(\M), y \in L^q(\M)$ with $1 \leq p,q \leq \infty$, $1/p+1/q = 1$ as continuous, bounded functions
\begin{equation}\label{eq:brackets-definition}
    \Gbracket{x}{y}(s) = \tau(x \alpha_s(y^*)), \; s \in G, 
    \quad 
    \Hbracket{y}{z}(t) = \tau(\beta_{t^{-1}}(y^*) z), \; t \in H.
\end{equation}
We record the following properties:
For all $f_0, f_1 \in L^1(G, m_G)$, $f_2, f_3 \in L^1(H, \widecheck m_H)$ and all elements $x,z \in L^p(\M), y \in L^q(\M)$ it holds that \[
    f_0 * (f_1 * x) = (f_0 * f_1) * x, \quad
    (z * f_2) * f_3 = z * (f_2 * f_3),
\] \[
    f_1 * \Gbracket{x}{y} = \Gbracket{f_1 * x}{y}, \quad
    \Hbracket{y}{z * f_2} = \Hbracket{y}{z} * f_2.
\] \[
    \Gbracket{x * f_2}{y} = \Gbracket{x}{y * f_2^*}, \quad
    \Hbracket{y}{f_1 * z} = \Hbracket{f_1^* * y}{z}.
\]

\begin{example}\label{ex:qha-example}
    Consider any von Neumann algebra $\M$ with semifinite trace $\tau$ that is acted on by an abelian group $G$ via some $\tau$-preserving action $\alpha$. One may set $H = G$, as then the action $\beta$ defined by $\beta_t = \alpha_{t^{-1}}$ commutes with $\alpha$. In that case $f * x = x * f$ and $\Gbracket{x}{y} = \Hbracket{y}{x}$. 
    
    This example includes most setups of quantum harmonic analysis \cite{We84, LuSk18, FuGa25}, where operator shifts of abelian groups are considered. 
    Werner's setting is obtained for $G = \mathbb R^{2d}$, with the projective, square-integrable, irreducible representation $\pi$ of $G$ on $\H = L^2(\mathbb R^d)$: \[
        (\pi_{(t,\omega)} f)(s) = e^{2 \pi i \omega \cdot s} f(s - t),
        \quad f \in \H, (t,\omega) \in G.
    \] One defines the shift action $\alpha_g(T) = \pi_g T \pi_g^*$ for bounded operators $T \in \M = B(\H)$. It is invariant under the trace $\tau = \operatorname{Tr}$. With the parity operator $(Pf)(s) = f(-s), f \in \H$ one may put $\widetilde T = (P T P)^*, T \in \M$ to recover Werner's operator-operator convolution \[
        S * T = \Gbracket{S}{\widetilde T}.
    \]
    Since $\alpha_{g}(PTP) = P\alpha_{g^{-1}}(T)P$ it holds that $\Gbracket{S}{\widetilde T} = \Gbracket{T}{\widetilde S} = \Hbracket{\widetilde S}{T}$. So the associativity of the operator convolutions \eqref{eq:werners-operator-associativity}, that is \cite[Proposition 3.2(i)]{We84}, can be expressed as \[
        \Gbracket{S}{\widetilde T} * R
        = (S * T) * R
        = S * (T * R)
        = S * \Gbracket{T}{\widetilde R}
        = S * \Hbracket{\widetilde T}{R}.
    \] We may hence rewrite \eqref{eq:werners-operator-associativity} as the compatibility $\Gbracket{x}{y} * z = x * \Hbracket{y}{z}$ for all $x,y,z \in \mathfrak m_\tau$.
\end{example}

\begin{example}\label{ex:commutative-example}
    Consider a single unimodular group $G$ with bi-invariant Haar measure $m$. We may consider the abelian von Neumann algebra $\M = L^\infty(G, m)$ of multiplication operators on $L^2(G, m)$ by essentially bounded functions. It has the semifinite trace $\tau = \int_G \,\cdot\, \dif m$, and two canonical actions $\alpha_s = \lambda_s = f \mapsto f(s^{-1}\,\cdot\,)$ and $\beta_t = \rho_t = f \mapsto f(\,\cdot\, t)$. That is, $H = G$ admits commuting, $\tau$-preserving actions on $\M$. 
    Here $f_1 * x$ and $x * f_2$ are to ordinary function convolutions, and \[
        \Gbracket{x}{y} = x * \tilde y, \quad
        \Hbracket{y}{z} = \tilde y * z,
    \] with $\tilde y(s) = \overline{y(s^{-1})}, s \in G, y \in \M$. Associativity of convolutions states $
        \Gbracket{x}{y} * z 
        = x * \Hbracket{y}{z}.
    $
\end{example}

In general, it might not necessarily be the case that $\Gbracket{x}{y}$ or $\Hbracket{y}{z}$ are integrable for any choice of nonzero $x,y,z \in \mathfrak m_\tau$. We therefore want to assume that the commuting, $\tau$-preserving actions $\alpha, \beta$ satisfy the conditions of the Duflo--Moore theorem. 
The following result will show that only constant Duflo--Moore operators can appear in that situation. 
This limitation gives an explanation why e.g. in QHA over the non-unimodular affine group \cite{BeBeLuSk22} only left-sided function-operator convolutions were considered: Any commuting operator-shift defining compatible right-sided function-operator convolutions could not have satisfied Moyal's identity.

\begin{theorem}
    Let $G, H$ be locally compact groups, and let $\alpha,\beta$ be a pair of commuting and $\tau$-preserving actions on $\M$. 
    Then the following holds: \[
        \text{The actions $\alpha,\beta$ are ergodic and $\tau$-integrable} \implies \text{The groups $G,H$ are unimodular.}
    \]
\end{theorem}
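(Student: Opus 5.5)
We use the uniqueness statement in the Duflo--Moore theorem (\cref{thm:duflo-moore}) for $\alpha$ together with the commutation with $\beta$. By \cref{thm:duflo-moore} applied to $\alpha$ there is a unique positive invertible operator $D$ affiliated to $\M$ such that \eqref{eqn:duflo-moore} holds. The strategy is to show that $D$ is invariant under $\beta$, hence affiliated to the fixed-point algebra of $\beta$, which is $\C 1$ by ergodicity of $\beta$. Then $D$ is a scalar multiple of the identity, and the last assertion of \cref{thm:duflo-moore} gives that $G$ is unimodular. By symmetry (exchanging the roles of $\alpha$ and $\beta$) we also get that $H$ is unimodular.

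First we show $\beta$-invariance of the weight $\tau_{D^{-1}}$. Fix $t \in H$, $x \in L^1(\M)_+$ and $y \in \M_+$. Since $\beta_t$ preserves $\tau$ and commutes with $\alpha$,
\[
\int_G \tau(x\,\alpha_s(\beta_t(y)))\dif m(s) = \int_G \tau(\beta_{t^{-1}}(x)\,\alpha_s(y))\dif m(s) = \tau(\beta_{t^{-1}}(x))\,\tau_{D^{-1}}(y) = \tau(x)\,\tau_{D^{-1}}(y).
\]
Here we used that $\beta_{t^{-1}}$ maps $L^1(\M)_+$ onto itself isometrically, since it is $\tau$-preserving. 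Comparing with \eqref{eqn:duflo-moore} for the element $\beta_t(y)$ and choosing $x$ with $\tau(x)=1$, we obtain $\tau_{D^{-1}}(\beta_t(y)) = \tau_{D^{-1}}(y)$ for all $y \in \M_+$. Equivalently, $\tau_{D^{-1}}\circ\beta_t = \tau_{D^{-1}}$.

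Next we pass from the weight to the operator. The action $\beta_t$ extends to affiliated operators, and a direct check from the definition $\tau_A(x) = \sup_\eps \tau(A_\eps^{1/2} x A_\eps^{1/2})$, using $\tau\circ\beta_t=\tau$ and $\beta_t(A_\eps) = \beta_t(A)_\eps$, gives $\tau_A\circ\beta_t = \tau_{\beta_{t^{-1}}(A)}$. Applied to $A = D^{-1}$, the invariance from the previous step reads $\tau_{\beta_{t^{-1}}(D^{-1})} = \tau_{D^{-1}}$. The uniqueness part of the Radon--Nikodym theorem of \cite{PeTa73} then yields $\beta_{t^{-1}}(D^{-1}) = D^{-1}$ for all $t\in H$. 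Consequently every spectral projection and every bounded function of $D^{-1}$, such as $(D^{-1})_\eps$, lies in the fixed-point algebra of $\beta$. By ergodicity of $\beta$ these are all scalars, so $D^{-1}$, and hence $D$, is a positive scalar multiple of $1$. \Cref{thm:duflo-moore} then gives that $G$ is unimodular.

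The main obstacle is the second step: making rigorous that $\beta_t$ acts on affiliated operators compatibly with the construction $A\mapsto\tau_A$. The cleanest route is to avoid unbounded operators entirely. We use that $\beta_t$ is implemented on $L^2(\M)$ by the unitary $U_t\colon \xi\mapsto\beta_t(\xi)$, which is unitary because $\tau$ is preserved and which normalizes both $\M$ and $\M'$. Then $U_t D U_t^*$ is again affiliated to $\M$, and functional calculus commutes with this conjugation. With this, the identity $\tau_A\circ\beta_t=\tau_{\beta_{t^{-1}}(A)}$ is immediate at the level of the bounded approximants $A_\eps$, and passing to the supremum over $\eps$ preserves it.
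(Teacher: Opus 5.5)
Your proof is correct and follows the paper's argument: you use the commutation of $\alpha$ and $\beta$ together with $\tau$-invariance to show that $\tau_{D^{-1}}$ is $\beta$-invariant, deduce $\beta_t(D^{-1}) = D^{-1}$, conclude from ergodicity of $\beta$ that $D$ is scalar, and then exchange the roles of $\alpha$ and $\beta$. The only difference is that you spell out, via the implementing unitaries $U_t$, uniqueness in the Radon--Nikodym theorem and spectral projections, the two steps (invariance of the weight $\Rightarrow$ invariance of $D^{-1}$ $\Rightarrow$ $D$ scalar) that the paper merely asserts.
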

\begin{proof}
    We assume the $\tau$-preserving action $\alpha$ of $G$ to be ergodic and $\tau$-integrable. Let $D$ be the positive, invertible Duflo--Moore operator affiliated to $\M$ for $\alpha$. Let $\beta$ be a $\tau$-preserving action of $H$ that commutes with $\alpha$. Consider nonzero $x,y \in \M_+$ with both $\tau(x)$ and $\tau_{D^{-1}}(y)$ finite. For all $s \in G, t \in H$ it holds \[
        \tau(x \, \alpha_s(\beta_{t^{-1}}(y))) 
        = \tau(x \, \beta_{t^{-1}}(\alpha_s(y)) ) 
        = \tau(\beta_t(x) \, \alpha_s(y)).
    \] Hence, by \cref{thm:duflo-moore} it follows for all $t \in H$ that \[
        \int_G \tau(x \, \alpha_s(\beta_{t^{-1}}(y))) \dif m_G(s) 
        = \int_G \tau(\beta_t(x) \, \alpha_s(y)) \dif m_G(s)
        = \tau(\beta_t(x)) \tau_{D^{-1}}(y) 
        = \tau(x) \tau_{D^{-1}}(y).
    \] Consequently, all $\beta_{t^{-1}}(y), t \in H$ are admissible elements of $\M$ and satisfy \[
        \tau(x) \tau_{D^{-1}}(\beta_{t^{-1}}(y)) 
        = \tau(x) \tau_{D^{-1}}(y).
    \] This shows that $\tau_{D^{-1}}$ is $\beta$-invariant and $\beta_t(D^{-1}) = D^{-1}$. So, assuming ergodicity for $\beta$ implies that $D$ is a scalar multiple of the identity, and hence that $G$ is unimodular. Interchanging the roles of $\alpha$ and $\beta$ shows the claim.
\end{proof}

From now on, we will restrict ourselves to \emph{unimodular} groups $G$ and $H$ that act via commuting, $\tau$-preserving actions $\alpha$ and $\beta$ on a common von Neumann algebra $\M$, which are both ergodic and $\tau$-integrable. We will omit writing the bi-invariant Haar measures $m_G$ and $m_H$, and fix the constants $d_\alpha, d_\beta > 0$ such that $d_\alpha 1$ and $d_\beta 1$ are the bounded Duflo--Moore operators for the actions $\alpha$ and $\beta$ respectively. Whenever $1/p + 1/q = 1 + 1/r$, we have sesquilinear maps \[
    \Gbracket{\,\cdot\,}{\,\cdot\,} \colon L^p(\M) \times L^q(\M) \to L^r(G), \quad
    \Hbracket{\,\cdot\,}{\,\cdot\,} \colon L^q(\M) \times L^p(\M) \to L^r(H),
\] such that for $x,y,z \in \mathfrak m_\tau$ we have identifications with continuous, bounded functions as in \eqref{eq:brackets-definition}. 
Moreover, we may consider the extended function-operator convolutions of \cref{sect:convolutions}. In particular, whenever $1 \leq a,b,c \leq r \leq \infty$ are such that $1/a + 1/b + 1/c = 2 + 1/r$, for all $x \in L^a(\M), y \in L^b(\M)$ and $z \in L^c(\M)$ we have defined two elements in $L^r(\M)$: \[
    \Gbracket{x}{y} * z \qquad \text{and}\qquad 
    x * \Hbracket{y}{z}.
\] The remainder of this section will be devoted to showing their equality whenever $d_\alpha = d_\beta$.


\begin{lemma}\label{lemma:two-variable-function}
    Let $x,y,z,w \in \mathfrak m_\tau$. Define \[
        f(s,t) 
        = \tau( w \beta_t(x) \alpha_s( \beta_t(y^*) z ) ),
        \quad s \in G, t \in H.
    \] Then $f$ is continuous and integrable as a function on $G \times H$.
\end{lemma}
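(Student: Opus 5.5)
The plan is to dominate $|f|$ pointwise by a product $\Phi(s,t)\Psi(s,t)$ of two nonnegative functions in $L^2(G\times H)$. Their $L^2$-norms can then be computed by integrating first over $G$ with the Duflo--Moore identity \eqref{eqn:duflo-moore} for $\alpha$, and then over $H$ with the one for $\beta$. Both Duflo--Moore operators are the scalars $d_\alpha 1$ and $d_\beta 1$, so every element of $\mathfrak m_\tau\cap\M_+$ is admissible and the Duflo--Moore formulas apply to all the positive elements that will appear.

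First we factor. Write $y^* = y_1 y_2$ and $w = w_1 w_2$ with $y_i, w_i \in \M\cap L^2(\M)$, for instance $y_1 = u|y^*|^{1/2}$ and $y_2 = |y^*|^{1/2}$ from the polar decomposition. Since $\alpha_s$ and $\beta_t$ are multiplicative, cyclicity of the trace gives
\[
f(s,t) = \tau\big( [\,w_2\,\beta_t(x)\,\alpha_s\beta_t(y_1)\,]\,[\,\alpha_s\beta_t(y_2)\,\alpha_s(z)\,w_1\,] \big).
\]
The Cauchy--Schwarz inequality for $\tau$ then yields $|f(s,t)|\le \Phi(s,t)\Psi(s,t)$, where $\Phi$ and $\Psi$ are the $L^2(\M)$-norms of the two brackets.

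Next we compute the squares. By the trace property and the fact that $\alpha_s$ and $\beta_t$ commute,
\[
\Phi(s,t)^2 = \tau\big(w_2\beta_t(xx^*)w_2^*\,\alpha_s\beta_t(y_1y_1^*)\big), \qquad \Psi(s,t)^2 = \tau\big(w_1w_1^*\,\alpha_s( z^*\beta_t(y_2^*y_2) z)\big).
\]
Both are of the form $\tau(a\,\alpha_s(b))$ with $a\in L^1(\M)_+$ and $b\in\M_+$. For fixed $t$, \eqref{eqn:duflo-moore} together with $\tau$-invariance of $\beta$ gives
\[
\int_G \Phi(s,t)^2\,ds = d_\alpha^{-1}\,\tau(y_1y_1^*)\,\tau(w_2^*w_2\,\beta_t(xx^*)).
\]
Applying the Duflo--Moore identity for $\beta$ (after rewriting $\beta_t$ as $\beta_{t^{-1}}$, which is harmless since $H$ is unimodular) gives
\[
\int_H\int_G\Phi^2 = d_\alpha^{-1}d_\beta^{-1}\,\tau(y_1y_1^*)\,\tau(w_2^*w_2)\,\tau(xx^*)<\infty.
\]
Similarly,
\[
\int_G\Psi^2\,ds = d_\alpha^{-1}\tau(w_1w_1^*)\,\tau(zz^*\beta_t(y_2^*y_2)),
\]
which is again integrable over $H$. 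Since everything is nonnegative, Tonelli's theorem justifies the iterated integration, and the Cauchy--Schwarz inequality on $G\times H$ gives $\|f\|_{L^1(G\times H)}\le\|\Phi\|_2\|\Psi\|_2<\infty$.

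For continuity and measurability, I would use that a $\tau$-preserving, ultraweakly continuous action extends to an isometric, weakly continuous, hence strongly continuous, representation on $L^2(\M)$. Thus $s\mapsto\alpha_s(a)$ and $t\mapsto\beta_t(a)$ are norm-continuous into $L^2(\M)$ for $a\in\M\cap L^2(\M)$. Since $\alpha_s\beta_t$ is a jointly strongly continuous action of $G\times H$, and products of a norm-bounded factor with $L^2$-continuous factors are handled via $\|ab\|_2\le\|a\|_\infty\|b\|_2$, the bracket expressions above are continuous in $(s,t)$ into $L^2(\M)$. Hence $f$ is continuous, and $\Phi,\Psi$ are continuous and in particular measurable.

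The step I expect to be most delicate is this continuity argument, namely the joint strong continuity and the handling of products in which one factor is only bounded. The integrability step is essentially forced once the factorizations $y^*=y_1y_2$ and $w=w_1w_2$ are chosen so that each Cauchy--Schwarz factor isolates one $\alpha_s$-orbit and one $\beta_t$-orbit.
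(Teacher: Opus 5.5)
Your proof is correct. For the integrability it takes a genuinely different, more elementary route than the paper.

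The paper's route:
\begin{itemize}
\item It fixes $t$ and recognises $f(\cdot,t)=\Gbracket{w\beta_t(x)}{(\beta_t(y^*)z)^*}$.
\item It applies the $L^1$-bound $\|\Gbracket{a}{b}\|_1\le d_\alpha^{-1}\|a\|_1\|b\|_1$ for arbitrary, non-positive trace-class $a,b$, taken from \cite{EnWe25}.
\item It bounds $\|w\beta_t(x)\|_1$ and $\|\beta_t(y^*)z\|_1$ by H\"older, via polar decompositions, through square roots of $H$-brackets such as $\Hbracket{|w|}{xx^*}(t)$.
\item It finishes with Cauchy--Schwarz on $H$ and the Duflo--Moore identity for $\beta$.
\end{itemize}

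You instead apply Cauchy--Schwarz pointwise in $L^2(\M)$ after factoring $w$ and $y^*$. This puts both $\Phi^2$ and $\Psi^2$ in the positive form $\tau(a\,\alpha_s(b))$. You then need only the positive identity \eqref{eqn:duflo-moore} in each variable and Cauchy--Schwarz on $G\times H$, so in effect you reprove the special case of the bracket $L^1$-bound that is needed. The paper's version is shorter given the bracket calculus of \cite{EnWe25}; yours is self-contained.

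The two estimates also distribute the norms differently. With the polar factorizations you get $\|f\|_{L^1(G\times H)}\le (d_\alpha d_\beta)^{-1}\|w\|_1\|y\|_1\|x\|_2\|z\|_2$, while the paper gets $(d_\alpha d_\beta)^{-1}\|w\|_1\|z\|_1\|x\|_2\|y\|_2$. Your continuity argument, via strong continuity on $L^2(\M)$ and $\|ab\|_2\le\|a\|_\infty\|b\|_2$, is essentially the paper's $\eps$-argument organised differently.

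One slip needs fixing. With $P=w_2\beta_t(x)\alpha_s\beta_t(y_1)$ one has $\Phi^2=\tau(PP^*)=\tau\bigl(\beta_t(x)^*w_2^*w_2\beta_t(x)\,\alpha_s\beta_t(y_1y_1^*)\bigr)$. It is not the expression with $w_2\beta_t(xx^*)w_2^*$ that you wrote, and the two positive operators differ in general. This does not affect the argument. The corrected expression is still of the form $\tau(a\,\alpha_s(b))$ with $a\in L^1(\M)_+$ and $b\in\M_+$. Moreover $\tau(\beta_t(x)^*w_2^*w_2\beta_t(x))=\tau(w_2^*w_2\beta_t(xx^*))$, so your formula for $\int_G\Phi^2\,ds$ is right.

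Also, you do not need to pass from $\beta_t$ to $\beta_{t^{-1}}$: $\int_H\tau(a\,\beta_t(b))\,dt$ is already in the form of \eqref{eqn:duflo-moore}.
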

\begin{proof}
    We freely assume all $x,y,z,w$ are nonzero. We show continuity in $(s_0, t_0) \in G \times H$. Let $\epsilon > 0$. By the ultraweak continuity of $\alpha$ we find an open neighborhood $U \subseteq G$ of $s_0$ such that \[
        |\tau(w \beta_{t_0}(x) \left(\alpha_{s_0}(\beta_{t_0}(y^*) z) - \alpha_s(\beta_{t_0}(y^*) z) \right))| < \eps, \quad s \in U.
    \] 
    Since $\alpha$ preserves the trace $\tau$, we know that it extends to a strongly continuous action by isometries on $L^2(\M)$.
    So we may choose an open neighborhood $V \subseteq H$ of $t_0$ such that for all $t \in V$ we have both \[
        \|\beta_{t_0}(x) - \beta_t(x)\|_2 < \eps/(\|y\|_2 \|z\|_\infty \|w\|_\infty),
    \] \[
        \|\beta_{t_0}(y) - \beta_t(y)\|_2 < \eps/(\|x\|_2 \|z\|_\infty \|w\|_\infty).
    \] For all $(s,t) \in U \times V$ it follows \[
        |\tau(w \left(\beta_{t_0}(x) - \beta_t(x)\right) \alpha_s(\beta_{t_0}(y)^* z))| \leq 
        \|\beta_{t_0}(x) - \beta_t(x)\|_2 \|\alpha_s(\beta_{t_0}(y)^* z) w\|_2 < \eps, 
    \]\[
        |\tau(w \beta_t(x) \alpha_s( \left(\beta_{t_0}(y) - \beta_t(y) \right)^* z))| 
        \leq \|z \alpha_{s^{-1}}(w \beta_t(x))\|_2 \|\beta_{t_0}(y) - \beta_t(y)\|_2 < \eps.
    \] In combination, \[
        |\tau(w \beta_{t_0}(x) \alpha_{s_0}(\beta_{t_0}(y^*) z)) - \tau(w \beta_t(x) \alpha_s(\beta_t(y^*) z))| 
        < 3\eps, \quad (s,t) \in U \times V.
    \]

    Now, as $f$ is continuous and hence measurable, by Tonelli's theorem it will suffice to show that the iterated integral $\int_H \int_G |f| \dif m_G \dif m_H$ is finite. As $f(\,\cdot\,, t) = \Gbracket{w \beta_t(x)}{(\beta_t(y^*) z)^*}$, we compute \[
        \int_G |f(s,t)| \dif s \leq d_\alpha^{-1} \|w \beta_t(x)\|_1 \|\beta_t(y^*) z\|_1, \quad t \in H.
    \] Let $w = u |w|$ be the polar decomposition with partial isometry $u \in \M$. Then for all $t \in H$: \[
        \|w \beta_t(x)\|_1
        = \| |w^*|^{1/2} u \, |w|^{1/2} \beta_t(x) \|_1
        \leq \||w^*|^{1/2} u\|_2 \||w|^{1/2} \beta_t(x)\|_2
        = \|w\|_1^{1/2} \Hbracket{|w|}{x x^*}(t)^{1/2}.
    \] Analogously, $\|\beta_t(y^*) z\|_1 \leq \|z\|_1^{1/2} \Hbracket{|z|}{y^* y}(t)^{1/2}$ for all $t \in H$. Therefore, by the Cauchy-Schwarz inequality and two more applications of the Duflo--Moore theorem \[\begin{aligned}
        \int_H \int_G |f(s,t)| \dif s \dif t
        &\leq d_\alpha^{-1} \|w\|_1^{1/2} \|z\|_1^{1/2} (\int_H \Hbracket{|w|}{x x^*} \dif m_H )^{1/2} (\int_H \Hbracket{|z|}{y^* y} \dif m_H)^{1/2} \\
        &= d_\alpha^{-1} \|w\|_1^{1/2} \|z\|_1^{1/2} (d_\beta^{-1} \|w\|_1 \|x x^*\|_1)^{1/2} (d_\beta^{-1} \|z\|_1 \|y^* y\|_1)^{1/2} < \infty.
    \end{aligned}\]
\end{proof}

\begin{lemma}\label{lemma:associativity}
    Let $x,y,z \in \mathfrak m_\tau$. Then as elements of $\mathfrak m_\tau \subseteq \M$ we have \[
        d_\alpha \cdot \Gbracket{x}{y} * z
        = d_\beta \cdot x * \Hbracket{y}{z}.
    \]
\end{lemma}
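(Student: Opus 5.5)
The plan is to test both sides against an arbitrary $w \in \mathfrak m_\tau$. We then compute the integral of the function $f$ from \cref{lemma:two-variable-function} over $G \times H$ in the two possible orders. Fubini's theorem applies because that lemma gives $f \in L^1(G \times H)$, so the two iterated integrals agree. Each order will reproduce one side of the claimed identity.

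\textbf{Preliminaries.} Since $G$ and $H$ are unimodular, the Duflo--Moore operators are $d_\alpha 1$ and $d_\beta 1$, so $\tau_{D^{-1}} = d_\alpha^{-1}\tau$ for $\alpha$. Every element of $\mathfrak m_\tau$ is a linear combination of four positive elements of $\mathfrak m_\tau$, and these are admissible. Polarizing \eqref{eqn:duflo-moore} therefore gives
\[
\int_G \tau(a\,\alpha_s(b)) \dif s = d_\alpha^{-1}\tau(a)\tau(b), \qquad \int_H \tau(a\,\beta_t(b)) \dif t = d_\beta^{-1}\tau(a)\tau(b), \qquad a,b\in\mathfrak m_\tau .
\]
In particular $\Gbracket{x}{y}\in L^1(G)$ and $\Hbracket{y}{z}\in L^1(H)$, so both sides of the claim are the $q=1$ convolutions and lie in $\mathfrak m_\tau$. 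By \eqref{eq:weak-function-operator-definition} and its right-sided analogue, using $\Hbracket{y}{z}(t^{-1}) = \tau(\beta_t(y^*)z)$, we have
\[
\tau\bigl(w(\Gbracket{x}{y}*z)\bigr) = \int_G \tau(x\alpha_s(y^*))\,\tau(w\alpha_s(z))\dif s,
\]
\[
\tau\bigl(w(x*\Hbracket{y}{z})\bigr) = \int_H \tau(\beta_t(y^*)z)\,\tau(w\beta_t(x))\dif t .
\]

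\textbf{Integrating over $G$ first.} Fix $t$ and apply the polarized Duflo--Moore identity for $\alpha$ with $a = w\beta_t(x)$ and $b = \beta_t(y^*)z$. This gives
\[
\int_G f(s,t)\dif s = d_\alpha^{-1}\,\tau(w\beta_t(x))\,\tau(\beta_t(y^*)z).
\]
Integrating over $H$ then yields $d_\alpha^{-1}\,\tau\bigl(w(x*\Hbracket{y}{z})\bigr)$.

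\textbf{Integrating over $H$ first.} Fix $s$ and rewrite $f$ using that $\alpha$ and $\beta$ commute and the trace property:
\[
f(s,t) = \tau\bigl(w\,\beta_t(x)\,\beta_t(\alpha_s(y^*))\,\alpha_s(z)\bigr) = \tau\bigl(\alpha_s(z)w\;\beta_t(x\alpha_s(y^*))\bigr).
\]
Apply the identity for $\beta$ with $a = \alpha_s(z)w$ and $b = x\alpha_s(y^*)$, both in $\mathfrak m_\tau$. This gives $\int_H f(s,t)\dif t = d_\beta^{-1}\,\tau(w\alpha_s(z))\,\tau(x\alpha_s(y^*))$. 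Integrating over $G$ then yields $d_\beta^{-1}\,\tau\bigl(w(\Gbracket{x}{y}*z)\bigr)$.

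\textbf{Conclusion.} Equating the two iterated integrals and multiplying by $d_\alpha d_\beta$ gives
\[
\tau\bigl(w\,[\,d_\alpha\Gbracket{x}{y}*z - d_\beta\, x*\Hbracket{y}{z}\,]\bigr)=0 \quad \text{for all } w\in\mathfrak m_\tau .
\]
The bracketed element lies in $\mathfrak m_\tau \subseteq L^1(\M)$. The functionals $\tau(w\,\cdot\,)$ with $w\in\mathfrak m_\tau$ separate points of $L^1(\M)$, since $\mathfrak m_\tau$ is ultraweakly dense in $\M = L^1(\M)^*$. Hence the difference vanishes, which is the claim.

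The only delicate points are the rewriting of $f$ in the $H$-first order, which is where commutativity of $\alpha$ and $\beta$ enters, and the Fubini justification. The latter is exactly what \cref{lemma:two-variable-function} supplies, so I expect no step beyond bookkeeping.
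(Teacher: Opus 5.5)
Your proposal is correct and follows essentially the same route as the paper. You pair both sides with $w \in \mathfrak m_\tau$ and integrate the function of \cref{lemma:two-variable-function} over $G \times H$ in both orders via Fubini, using commutativity of $\alpha$ and $\beta$ to rewrite the integrand for the $H$-first order and the Duflo--Moore identities with constants $d_\alpha^{-1}$ and $d_\beta^{-1}$; your explicit polarization to non-positive elements and the separation argument via ultraweak density of $\mathfrak m_\tau$ only spell out details the paper leaves implicit.
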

\begin{proof}
    As $\tau$ is semifinite, it suffices to show for all $w \in \mathfrak m_\tau$ that \[
        d_\beta^{-1} \, \tau( \left( \Gbracket{x}{y} * z \right) w )
        = d_\alpha^{-1} \, \tau( w \left( x * \Hbracket{y}{z} \right) ).
    \]
    
    First, using that $\alpha$ and $\beta$ commute, we find for all $s \in G$ and $t \in H$ the equality: \[
        \tau( \beta_t( x \alpha_s(y^*) ) \alpha_s(z) w )
        = \tau( w \beta_t(x) \alpha_s( \beta_t(y^*) z ) ).
    \]
    In \cref{lemma:two-variable-function} we saw that the above defines an integrable function $f$ on $G \times H$. Using Fubini's theorem, we may integrate $f$ in two ways: We use the Duflo--Moore theorem for $\beta$ to find \[
        \int_G \int_H f(s,t) \dif t \dif s
        = \int_G d_\beta^{-1} \tau(x \alpha_s(y^*)) \tau(\alpha_s(z) w) \dif s
        = d_\beta^{-1} \tau(\left( \Gbracket{x}{y} * z \right) w).
    \] On the other hand, we use the Duflo--Moore theorem for $\alpha$ to find \[
        \int_H \int_G f(s,t) \dif s \dif t
        = \int_H d_\alpha^{-1} \tau(w \beta_t(x)) \tau(\beta_t(y^*) z) \dif t
        = d_\alpha^{-1} \tau(w \left(x * \Hbracket{y}{z}\right)).
    \]
\end{proof}

\begin{theorem}\label{thm:associativity}
    Let $\alpha$, $\beta$ be commuting, $\tau$-preserving actions of unimodular groups $G$, $H$ on the same von Neumann algebra $\M$ with semifinite trace $\tau$, which are ergodic and $\tau$-integrable. Let $d_\alpha, d_\beta > 0$ be such that $d_\alpha 1$ and $d_\beta 1$ are their bounded Duflo--Moore operators.
    
    Let $1 \leq a,b,c \leq r \leq \infty$ be such that $1/a + 1/b + 1/c = 2 + 1/r$. Then \begin{equation}\label{eq:tripple-convolution}
        d_\alpha \cdot \Gbracket{x}{y} * z
        = d_\beta \cdot x * \Hbracket{y}{z}
    \end{equation}
    as elements of $L^r(\M)$ for all triples of $x \in L^a(\M)$, $y \in L^b(\M)$ and $z \in L^c(\M)$.
\end{theorem}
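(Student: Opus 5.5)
The plan is to extend \cref{lemma:associativity} from $\mathfrak m_\tau$ to the stated $L^p$-spaces by density and continuity, and to treat the endpoint $r = \infty$ by a weak-$*$ argument.

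\textbf{Step 1: trilinear bounds.} We first check that both sides of \eqref{eq:tripple-convolution} are bounded trilinear maps $L^a(\M) \times L^b(\M) \times L^c(\M) \to L^r(\M)$. For the left side, put $1/q = 1/a + 1/b - 1$. Then \eqref{eq:bracket-continuous}, applied with the appropriate exponents, gives $\|\Gbracket{x}{y}\|_q \lesssim \|x\|_a \|y\|_b$. Moreover $1/q + 1/c = 1 + 1/r$ and $q, c \le r$, so the extended convolution bound gives $\|\Gbracket{x}{y} * z\|_r \lesssim \|x\|_a\|y\|_b\|z\|_c$. One must check that $q \ge 1$, i.e. $1/a + 1/b \le 2$. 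This holds since $1/c \ge 1/r$ forces $1/a + 1/b \le 2$. One must also check that $q \le r$; this holds since $1/a + 1/b - 1 = 1 + 1/r - 1/c \ge 1/r$. The right side is handled symmetrically, using the $\beta$-version of the bracket bound and the right convolution by $L^{q}(H)$ with $1/q = 1/b + 1/c - 1$. The right convolution is defined by the analogous duality. This step only bookkeeps exponents; the bounds themselves come from \cite{EnWe25} and \cref{sect:convolutions}.

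\textbf{Step 2: density for $r<\infty$.} When every finite exponent is at most $r < \infty$, the space $\mathfrak m_\tau$ is dense in each of $L^a(\M)$, $L^b(\M)$, $L^c(\M)$. Both sides are continuous trilinear maps that agree on $\mathfrak m_\tau^3$ by \cref{lemma:associativity}, so they agree everywhere.

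The only subtlety is that $a$, $b$ or $c$ may equal $\infty$, where $\mathfrak m_\tau$ is not norm dense. The exponent relation forces $1/a + 1/b + 1/c \ge 2$, so at most one of $a,b,c$ is infinite. Moreover, if one of them is $\infty$, the other two are conjugate to $1$ and $r = \infty$. So the case $r < \infty$ never involves $L^\infty$, and density settles it.

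\textbf{Step 3: the case $r = \infty$.} This is the main obstacle. Here equality should be tested against $w \in L^1(\M)$, or even $w \in \mathfrak m_\tau$ by density in $L^1$, since both sides lie in $L^\infty(\M) = \M$. We use the weak formula \eqref{eq:weak-function-operator-definition}, valid for the extended convolutions as established in \cref{sect:convolutions}. It expresses $\tau((\Gbracket{x}{y}*z)w)$ as $\int_G \Gbracket{x}{y}(s)\,\tau(\alpha_s(z)w)\,ds$, and similarly for the other side.

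For fixed $w \in \mathfrak m_\tau$, each side is then a trilinear form in $(x,y,z)$. We show it is separately continuous in each variable for the weak-$*$ topology on whichever factor is $L^\infty$, and for the norm topology on the others. For instance, with $z \in \M$ the map $z \mapsto \tau(\alpha_s(z)w)$ is weak-$*$ continuous, and pairing against the fixed $L^1(G)$-function $\Gbracket{x}{y}$ preserves this by dominated convergence along bounded nets, via Kaplansky density. Since $\mathfrak m_\tau$ is ultraweakly dense in $\M$ and bounded nets can be chosen, we may approximate the $L^\infty$ variable by a bounded net in $\mathfrak m_\tau$. The remaining $L^1$ variables are approximated in norm. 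Passing to the limit in \cref{lemma:associativity} then yields \eqref{eq:tripple-convolution}.

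The delicate point is justifying the dominated convergence. When the $L^\infty$ variable is $x$ or $y$, this requires continuity of $x \mapsto \Gbracket{x}{y}$ from bounded weak-$*$ nets into pointwise convergence with an $L^1$ bound on $\Gbracket{x}{y}$. The pointwise convergence holds because $\Gbracket{x}{y}(s) = \tau(x\alpha_s(y^*))$ with $\alpha_s(y^*) \in L^1$. The $L^1$ bound may fail as a domination. If so, we instead pair directly with a weak-$*$ formulation obtained by Fubini on the double integral of \cref{lemma:two-variable-function}, using the other variables in $\mathfrak m_\tau$.
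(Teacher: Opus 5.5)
Your proposal is correct, and its core is the paper's argument: both sides are continuous in $(x,y,z)$ by \eqref{eq:bracket-continuous} and the bounds for the extended convolutions, they agree on $\mathfrak m_\tau$ by \cref{lemma:associativity}, and one passes to the limit.

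The difference is at the endpoint. The paper's proof approximates $x$, $y$, $z$ by elements of $\mathfrak m_\tau$ in the $L^a$, $L^b$, $L^c$ norms in every case. This is not possible when one of $a,b,c$ equals $\infty$, because $\mathfrak m_\tau$ is in general not norm dense in $\M$; examples are the trace-class operators in $B(\H)$, or $L^1\cap L^\infty$ in $L^\infty(G)$ for non-compact $G$. You observe that this happens only when $(a,b,c)$ is a permutation of $(\infty,1,1)$, with $r=\infty$. Your weak-$*$ argument in Step 3 then fills exactly this case. The paper does use this case later: $y\in\M$ in \cref{lemma:regularity-implications}, and $x\in\M$ in the Tauberian theorem. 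So your version is the more complete proof.

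Two small repairs are needed in Step 3.

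First, dominated convergence is not valid along nets. You can fix this in either of two ways:
\begin{itemize}
\item Use that $\M$ has separable predual. Then bounded balls are weak-$*$ metrizable, and Kaplansky density gives a bounded sequence.
\item More simply, note that each tested side has the form $\tau(v\eta)$, where $v$ is the $L^\infty$ variable and $\eta\in L^1(\M)$ is a fixed Bochner integral. For example, $\int_G \Gbracket{x}{y}(s)\,\tau(\alpha_s(z)w)\dif s=\tau\bigl(z\int_G \Gbracket{x}{y}(s)\,\alpha_{s^{-1}}(w)\dif s\bigr)$. Such an expression is weak-$*$ continuous in $v$ outright.
\end{itemize}

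Second, your worry that the domination may fail is unfounded, so the Fubini fallback is not needed. When $x$ (or $y$) is the $L^\infty$ variable, $\Gbracket{x_i}{y}$ is only bounded, not integrable. But it satisfies the uniform bound $|\Gbracket{x_i}{y}(s)|\le\|x_i\|_\infty\|y\|_1$. The other factor $s\mapsto\tau(\alpha_s(z)w)$ is integrable because $z,w\in L^1(\M)$. The $H$-side works the same way, with integrable factor $t\mapsto\tau(w\beta_t(x))$ or $\Hbracket{y}{z}$.
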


\begin{proof}
    We choose $1 \leq s,t \leq r$ are such that $1 + 1/s = 1/a + 1/b$ and $1 + 1/t = 1/b + 1/c$. Then $\Gbracket{x}{y} \in L^{s}(G)$ and $\Hbracket{y}{z} \in L^{t}(H)$, and since $1 + 1/r = 1/s + 1/c = 1/a + 1/t$ both sides of the claimed equality \eqref{eq:tripple-convolution} are well defined elements of $L^r(\M)$. We will show that their difference is zero in the norm of $L^r(\M)$.

    Let $(x_k)_k, (y_m)_m, (z_n)_n \subseteq \mathfrak m_\tau$ be such that $x_k \to x$ in $L^a(\M)$, that $y_m \to y$ in $L^b(\M)$ and that $z_n \to z$ in $L^c(\M)$. From \cref{lemma:associativity} it follows for all indices $k,m,n$ that \[
        d_\alpha \cdot \Gbracket{x_k}{y_m} * z_n 
        = d_\beta \cdot x_k * \Hbracket{y_m}{z_n}.
    \] 
    From \eqref{eq:bracket-continuous} it follows that $\Gbracket{\,\cdot\,}{\,\cdot\,} \colon L^a(\M) \times L^b(\M) \to L^s(G)$ is continuous, and also the function-operator convolution $L^s(G) \times L^c(\M) \to L^r(\M)$ is continuous. In combination, we find that $\Gbracket{x_k}{y_m} * z_n \to \Gbracket{x}{y} * z$ in $L^r(\M)$.
    Analogously, $x_k * \Hbracket{y_m}{z_n} \to x * \Hbracket{y}{z}$ in $L^r(\M)$. So \[\begin{aligned}
        &\| d_\alpha \cdot \Gbracket{x}{y} * z - d_\beta \cdot x * \Hbracket{y}{z} \|_r \\
        &\qquad \leq d_\alpha \cdot \| \Gbracket{x}{y} * z - \Gbracket{x_k}{y_m} * z_n \|_r + d_\beta \cdot \| x_k * \Hbracket{y_m}{z_n} - x * \Hbracket{y}{z} \|_r \to 0.
    \end{aligned}\] 
\end{proof}

\begin{remark}
    Let $\alpha,\beta$ be again only $\tau$-preserving actions of locally compact groups $G, H$ on the common von Neumann algebra $\M$ with semifinite trace $\tau$. If only one action, say $\beta$, satisfies the conditions of the Duflo--Moore theorem with operator $D$, one can still consider a weak version of the bracket compatibility: 
    For the weights $\psi = \tau_w, w \in \mathfrak m_\tau^+$ it may hold \begin{equation*}
        \int_G \Gbracket{x}{y}(s) \, \psi(\alpha_s(z)) \dif m_G(s) = \psi\left(x * \Hbracket{y}{z}\right), \quad x,z \in \mathfrak m_\tau^+, y \in \mathfrak m_{\tau_{D^{-1}}}^+.
    \end{equation*}
    From this assertion it follows that also $\alpha$ has to satisfy the conditions of the Duflo--Moore theorem, and that the Duflo--Moore operators of $\alpha$ and $\beta$ agree:

    As $\tau$ is semifinite, one finds a net of projections $(p_i)_i \subseteq \mathfrak m_\tau$ with least upper bound $1 \in \M_+$. Then considering $\psi = \tau_{p_i}$ and applying the monotone convergence theorem one finds \[
        \int_G \Gbracket{x}{y}(s) \, \tau(z) \dif m_G(s) = \tau\left(x * \Hbracket{y}{z}\right) = \tau(x) \int_H \Hbracket{y}{z} \dif\widecheck m_H.
    \]
    After applying the Duflo--Moore theorem for $\beta$ and canceling factors we have \[
        \int_G \Gbracket{x}{y}(s) \dif m_G(s) = \tau(x) \tau_{D^{-1}}(y), \quad x \in \mathfrak m_\tau^+, y \in \mathfrak m_{\tau_{D^{-1}}}^+.
    \]
    Then \cref{prop:reverse-duflo-moore} shows that $\alpha$ is ergodic, $\tau$-integrable and has Duflo--Moore operator $D$.
\end{remark}

\subsection{Bimodule structures}\label{sect:bimodule}

Let $G$ and $H$ be unimodular groups. Let $\M$ be a von Neumann algebra with semifinite trace $\tau$, and let $\alpha, \beta$ be commuting, $\tau$-preserving actions on $\M$. We saw that $\mathfrak m_\tau$ carries a left-$L^1(G)$ and right-$L^1(H)$ bimodule structure via function-operator convolutions. Assuming that $\alpha, \beta$ are both ergodic and $\tau$-integrable, we have defined sesquilinear brackets \[
    \Gbracket{\,\cdot\,}{\,\cdot\,}\colon \mathfrak m_\tau \times \mathfrak m_\tau \to L^1(G), \quad
    \Hbracket{\,\cdot\,}{\,\cdot\,}\colon \mathfrak m_\tau \times \mathfrak m_\tau \to L^1(H),
\]
We normalize the Haar measures on $G$ and $H$ such that it holds $d_\alpha = d_\beta$ for the constants $d_\alpha, d_\beta > 0$ from the Duflo--Moore theorem. 

Denote by $\lambda$ and $\rho$ the integrated left- and right-regular representations of $L^1(G)$ on $L^2(G)$ and respectively $L^1(H)$ on $L^2(H)$ via left- and right-convolution. The norm-closures of $\lambda(L^1(G))$ and $\rho(L^1(H))$ define the reduced group $C^*$-algebras $C_r^*(G)$ and $C_r^*(H)$. 

\begin{proposition}\label{prop:morita-equivalence}
    Suppose that $\lambda(\Gbracket{\mathfrak m_\tau}{\mathfrak m_\tau}) \subseteq C_r^*(G)$ and $\rho(\Hbracket{\mathfrak m_\tau}{\mathfrak m_\tau}) \subseteq C_r^*(H)$ span dense ideals of the reduced group $C^*$-algebras.
    Then $C_r^*(G)$ and $C_r^*(H)$ are Morita equivalent.
\end{proposition}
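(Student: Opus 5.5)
The plan is to build an imprimitivity bimodule in the sense of Rieffel. The underlying space is $X_0 = \mathfrak m_\tau$, with the left action of $A_0 = L^1(G)$ given by $f_1 * x$ and the right action of $B_0 = L^1(H)$ given by $x * f_2$ (here $\widecheck m_H = m_H$ because $H$ is unimodular). The left-valued inner product is $\Gbracket{\,\cdot\,}{\,\cdot\,}$ and the right-valued one is $\Hbracket{\,\cdot\,}{\,\cdot\,}$; by the Duflo--Moore theorem both take values in $L^1$. These inner products are pushed into $C_r^*(G)$ and $C_r^*(H)$ through $\lambda$ and $\rho$. Once $X_0$ is a pre-imprimitivity bimodule between dense $*$-subalgebras of the two reduced group algebras, with full inner products, completing it gives an imprimitivity bimodule. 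By the hypothesis, the closed spans of the inner products are dense ideals, hence all of $C_r^*(G)$ and $C_r^*(H)$, so the two algebras are Morita equivalent.

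First I would check the algebraic axioms, all of which are already recorded before \cref{ex:qha-example}.
\begin{itemize}
\item Sesquilinearity and the module compatibilities: $f_1 * \Gbracket{x}{y} = \Gbracket{f_1 * x}{y}$, $\Hbracket{y}{z * f_2} = \Hbracket{y}{z} * f_2$, $\Gbracket{x * f_2}{y} = \Gbracket{x}{y * f_2^*}$ and $\Hbracket{y}{f_1 * z} = \Hbracket{f_1^* * y}{z}$.
\item Hermitian symmetry: $\Gbracket{x}{y}^* = \Gbracket{y}{x}$ and $\Hbracket{x}{y}^* = \Hbracket{y}{x}$. This follows directly from unimodularity and $\tau$-invariance.
\item The crucial associativity $\Gbracket{x}{y} * z = x * \Hbracket{y}{z}$. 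This is \cref{lemma:associativity}, using the normalization $d_\alpha = d_\beta$.
\end{itemize}

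Next comes positivity. For the left inner product, $\lambda(\Gbracket{x}{x}) \geq 0$ in $C_r^*(G)$. I would argue as in \cref{lemma:weak-containment}: $\phi = \Gbracket{x}{x}$ is positive definite and integrable, so by Godement $\phi = g * \tilde g$ with $g \in L^2(G)$, and then $\lambda(\phi) = \lambda(g)\lambda(g)^* \geq 0$ as a bounded operator. Alternatively, $\langle \lambda(\phi) h, h\rangle = \|h^* * x\|_2^2$-type identities show positivity directly. The right side is treated symmetrically.

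Then I need the boundedness conditions $\langle f * x, f * x\rangle_\beta \leq \|\lambda(f)\|^2 \langle x, x\rangle_\beta$ in $C_r^*(H)$, and symmetrically for $H$. To get them, I would use associativity to rewrite the right inner product, e.g. $\|\rho(\Hbracket{x}{x})\|$ computed via $x * \Hbracket{x}{x} = \Gbracket{x}{x} * x$. For the action bound I would use \cref{lemma:weak-containment}, $\|f * x\|_2 \leq \|\lambda(f)\|\,\|x\|_2$, which says the left action extends to $C_r^*(G)$.

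I expect this boundedness step to be the main obstacle: producing the operator inequality in $C_r^*(H)$, not merely an $L^2$ norm estimate. To handle it, I would show that the map $\lambda(f) \mapsto (x \mapsto f*x)$ extends to a $*$-representation of $C_r^*(G)$ by adjointable operators on the right pre-Hilbert $C_r^*(H)$-module. The route is to realize $\rho(\Hbracket{x}{x})$ through the representation of $H$ on $L^2(\M)$ and apply the $H$-version of \cref{lemma:weak-containment} together with the commutation of $\alpha$ and $\beta$. If that is too delicate, I would fall back on Rieffel's criterion, in the form of Raeburn--Williams Proposition 3.12: for a pre-imprimitivity bimodule with full inner products satisfying associativity and positivity, the boundedness axioms are automatic.
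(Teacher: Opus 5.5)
Your framework matches the paper's: the bimodule $\mathfrak m_\tau$, the two brackets, the recorded compatibilities, associativity from \cref{lemma:associativity} with $d_\alpha=d_\beta$, positivity via identities of the form $\langle h * \Hbracket{x}{x}, h\rangle = \|x*h^*\|_2^2$, and fullness from the hypothesis. But the one step with real content is left unproved. That step is the inequality $\Hbracket{f*x}{f*x} \le \|\lambda(f)\|^2\,\Hbracket{x}{x}$ in $C_r^*(H)$, together with its mirror in $C_r^*(G)$, and neither of your two routes closes it.

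\textbf{The fallback is false.} Raeburn--Williams Proposition 3.12 takes these inequalities as \emph{hypotheses}. They are automatic when the algebra acting on $X_0$ is itself a $C^*$-algebra, because then $\|a\|^2-a^*a$ has a square root inside it. $L^1(G)$ is only a Banach $*$-algebra. The square-root trick there gives only $\langle f*x,f*x\rangle_\beta \le \|f\|_1^2\langle x,x\rangle_\beta$. That extends the left action to the full $C^*(G)$, not to $C_r^*(G)$, and for non-amenable $G$ these differ.

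\textbf{The main route does not reach the goal either.} It mostly restates what has to be shown. The lemma it names, the $H$-version of \cref{lemma:weak-containment}, is the wrong one: a bound in terms of $\|\lambda(f)\|$ can only come from the $G$-version.

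\textbf{The missing link.} Use the $H$-side positivity identity as a test for the order in $C_r^*(H)\subseteq B(L^2(H))$. An inequality between elements there holds if and only if it holds in every vector state $h\in L^2(H)$. Since $\alpha$ and $\beta$ commute, $(f*x)*h^* = f*(x*h^*)$, so
\[
  \langle h*\Hbracket{f*x}{f*x},h\rangle = \|f*(x*h^*)\|_2^2 \le \|\lambda(f)\|^2\,\|x*h^*\|_2^2 = \|\lambda(f)\|^2\,\langle h*\Hbracket{x}{x},h\rangle .
\]
Here the inequality is \cref{lemma:weak-containment} for $\alpha$, applied to $x*h^*\in L^2(\M)$ after extending from $\mathfrak m_\tau$ by density. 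The mirror inequality $\Gbracket{x*f}{x*f}\le\|\rho(f)\|^2\,\Gbracket{x}{x}$ follows the same way with the roles of $\alpha$ and $\beta$ exchanged. This is exactly the paper's argument. With it, Raeburn--Williams Proposition 3.12 applies and the rest of your proof goes through.
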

\begin{proof}
    We will show that $\mathfrak m_\tau$ is a pre-imprimitivity $L^1(G)$-$L^1(H)$-bimodule in the sense of \cite[Definition 3.9]{WiRae98}. Then by \cite[Proposition 3.12]{WiRae98} the completion of $\mathfrak m_\tau$ with respect to the norm given by \[
        \|x\| = \|\lambda(\Gbracket{x}{x})\|^{1/2} = \|\rho(\Hbracket{x}{x})\|^{1/2}, \quad x \in \mathfrak m_\tau,
    \] will be an imprimitivity bimodule for the reduced $C^*$-algebras.

    The bimodule structure on $\mathfrak m_\tau$ is given by the function-operator convolution. In addition, $\Gbracket{\,\cdot\,}{\,\cdot\,}$ is a left-$L^1(G)$- and $\Hbracket{\,\cdot\,}{\,\cdot\,}$ a right-$L^1(H)$-pre-inner product for the vector space $\mathfrak m_\tau$: For this we 
    compute that for all $x \in \mathfrak m_\tau$ and all $g \in L^2(G), h \in L^2(H)$ it holds \begin{equation}\label{eq:positive-bracket}
        \langle \Gbracket{x}{x} * g, g \rangle
        = \|g^* * x\|_2^2 \geq 0, \quad 
        \langle h * \Hbracket{x}{x}, h \rangle
        = \|x * h^*\|_2^2 \geq 0.
    \end{equation}
    So, for all $x \in \mathfrak m_\tau$, $\lambda(\Gbracket{x}{x})$ defines a positive element of $C_r^*(G)$, and $\rho(\Hbracket{x}{x})$ a positive element of $C_r^*(H)$. 
    
    For the pre-imprimitivity bimodule structure, all it remains to check is \[
        \Hbracket{f * x}{f * x} \leq \|\lambda(f)\|^2 \, \Hbracket{x}{x}, \quad x \in \mathfrak m_\tau, f \in L^1(G),
    \] in the sense of positivity of the $C^*$-algebra $C_r^*(H)$. By an analogous argument, $\Gbracket{x * f}{x * f} \leq \|\rho(f)\|^2 \, \Gbracket{x}{x}$ will follow in $C_r^*(G)$. Using \eqref{eq:positive-bracket} we may equivalently show for all $h \in L^2(H)$ \[
        \|(f * x) * h^*\|_2^2
        \leq 
        \|\lambda(f)\|^2 \|x * h^*\|_2^2,
        \quad x \in \mathfrak m_\tau, f \in L^1(G).
    \]
    This indeed holds because of \cref{lemma:weak-containment} and $(f * x) * h^* = f * (x * h^*)$.
\end{proof}

\begin{remark}
    The density condition in \cref{prop:morita-equivalence} is not automatic: For a compact group $G$ consider the trivial action $\alpha_g = \operatorname{id}, g \in G$ on the one-dimensional von Neumann algebra $\M = \mathbb C$ with finite trace $\tau = \operatorname{id}$. Then $\alpha$ is ergodic, $\tau$-preserving and $\tau$-integrable. However, $\Gbracket{x}{y}$ are the constant functions with value $x \overline y$, for all $x,y \in \M$.
\end{remark}

\subsection{Regularity}\label{sect:regularity}

We again fix commuting, $\tau$-preserving actions $\alpha,\beta$ of unimodular groups $G,H$ on a common von Neumann algebra $\M$ with semifinite trace $\tau$. We assume that both actions are ergodic and $\tau$-integrable, such that the Duflo--Moore theorem applies and yields constants $d_\alpha, d_\beta > 0$. We assume $d_\alpha = d_\beta = 1$ for simplicity.

For a fixed operator $a \in L^1(\M)$ we can consider the following statements: 

\begin{enumerate}
    \item[(1)] The left-translates of $\Gbracket{a}{a} \in L^1(G)$ span a dense subset of $L^1(G)$.
    
        \item[(2a)] The map $L^1(\M) \to L^1(G), x \mapsto \Gbracket{x}{a}$ has dense range.
        \item[(2b)] The map $L^\infty(G) \to \M, f \mapsto f * a$ is injective.

        \item[(3a)] The map $\M \to C_b(G), y \mapsto \Gbracket{a}{y}$ is injective.
        \item[(3b)] The $\alpha$-translates of $a$ span a dense subset of $L^1(\M)$.
\end{enumerate}

\begin{enumerate}
    \item[(1')] The right-translates of $\Hbracket{a}{a} \in L^1(H)$ span a dense subset of $L^1(H)$.

        \item[(2'a)] The map $L^1(\M) \to L^1(H), z \mapsto \Hbracket{a}{z}$ has dense range.
        \item[(2'b)] The map $L^\infty(H) \to \M, f \mapsto a * f$ is injective.
    
        \item[(3'a)] The map $\M \to C_b(H), y \mapsto \Hbracket{y}{a}$ is injective.
        \item[(3'b)] The $\beta$-translates of $a$ span a dense subset of $L^1(\M)$.
\end{enumerate}

As a consequence of the Hahn-Banach theorem and $\M$ being the dual of $L^1(\M)$, we have equivalences of the statements $(2a) \iff (2b)$ and $(3a) \iff (3b)$, as well as $(2'a) \iff (2'b)$ and $(3'a) \iff (3'b)$. So, for $X \in \{2, 3, 2', 3'\}$ we write $(X)$ whenever $(Xa)$ and $(Xb)$ hold.

\begin{remark}
    Recall \cref{ex:qha-example} with $G = H$ an abelian group. Then for every $n \in \{1,2,3\}$ statements $(n)$ and $(n')$ are identical. In the setting of quantum harmonic analysis, where $\M = B(\mathcal H)$ is considered with the standard trace $\tau = \operatorname{Tr}$, and $\alpha$ is the conjugation by some irreducible, square-integrable (projective) representation $\pi$ of an abelian $G$, statements of above form are considered and shown to be equivalent already in \cite[Proposition 3.5]{We84}, \cite{KiLaSchuWer12}.
    
    In \cite{KiLaSchuWer12} they emphasize the application to informational completeness of shift-covariant observables: The measurement of a quantum state $\rho \in L^1(\M)$ with respect to a $\M_+$-valued measure $\mu$ on $G$ is modeled with the probability density $f_\rho(s) = \operatorname{Tr}(\rho \, \mu(s))$. If the observable is shift-covariant, meaning $\mu(s A) = \alpha_s(\mu(A))$ for $s \in G, A \subseteq G$, it can be shown that $\mu(A) = \int_A \alpha_s(a) \dif m(s)$ for some positive $a \in L^1(\M)$ with $\tau(a) = 1$. Then $f_\rho = \Gbracket{\rho}{a}$. That is, injectivity of $\rho \mapsto f_\rho$ characterizes reconstructability of quantum states from their measurements.
\end{remark}

\begin{lemma}\label{lemma:regularity-implications}
    Regarding the above statements:
    \begin{enumerate}
        \item[(i)] For all $a \in L^1(\M)$ it holds (1) $\implies$ (2) $\implies$ (3'), as well as (1') $\implies$ (2') $\implies$ (3).
        \item[(ii)] Suppose the statements (2) and (2') are satisfied by some nonzero $a_1,a_2 \in L^1(\M)$. Then for all $a \in L^1(\M)$ it holds (3') $\implies$ (2), as well as (3) $\implies$ (2').
        \item[(iii)] For all $a \in L^1(\M)$ it holds $((2) \text{ and } (3)) \implies (1)$, as well as $((2') \text{ and } (3')) \implies (1')$.
    \end{enumerate}
\end{lemma}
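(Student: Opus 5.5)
The plan is to combine the associativity of \cref{thm:associativity} (with $d_\alpha = d_\beta = 1$) with the weak formula \eqref{eq:weak-function-operator-definition}. The latter extends by continuity to $f \in L^\infty(G)$ and $y \in L^1(\M)$: in that case $f * y \in \M$ is defined through the adjoint of $x \mapsto \Gbracket{x}{y}$, with exponents $q=\infty$, $p=1$, $r=\infty$. We also use the fact that for $x \in L^1(\M)$ and $y \in \M$ the brackets are genuine bounded continuous functions given by \eqref{eq:brackets-definition}, so they may be evaluated pointwise. I only write out the unprimed half of each item; the primed half follows by exchanging the roles of $(\alpha,G)$ and $(\beta,H)$.

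\emph{(i).} For (1) $\implies$ (2), use $\lambda_t \Gbracket{x}{y} = \Gbracket{\alpha_t(x)}{y}$, which follows from $\tau$-invariance. Thus every left translate of $\Gbracket{a}{a}$ equals $\Gbracket{\alpha_t(a)}{a}$, which lies in the range of $x \mapsto \Gbracket{x}{a}$. Density of the translates therefore gives (2a).

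For (2) $\implies$ (3'), let $y \in \M$ with $\Hbracket{y}{a} = 0$. Then $x * \Hbracket{y}{a} = 0$ for every $x \in L^1(\M)$. \Cref{thm:associativity} applies with exponents $(a,b,c,r) = (1,\infty,1,\infty)$, since $1 + 0 + 1 = 2 + 1/\infty$. It gives $\Gbracket{x}{y} * a = 0$ with $\Gbracket{x}{y} \in L^\infty(G)$. By (2b) we get $\Gbracket{x}{y} = 0$, and evaluating this continuous function at $e$ gives $\tau(x y^*) = 0$ for all $x \in L^1(\M)$. Hence $y = 0$, which is (3'a).

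\emph{(ii).} Assume (3') for $a$ and let $f \in L^\infty(G)$ with $f * a = 0$. For $y \in L^1(\M)$ and $t \in H$, the weak formula together with the commutation of $\alpha$ and $\beta$ gives
\[
  0 = \tau\big(\beta_{t^{-1}}(y^*)(f * a)\big)
    = \int_G f(s)\, \tau\big(\alpha_{s^{-1}}(\beta_{t^{-1}}(y^*))\, a\big) \dif s
    = \Hbracket{\check f * y}{a}(t),
\]
where $\check f(s) = \overline{f(s^{-1})}$. This uses unimodularity for the substitution $s \mapsto s^{-1}$, and I expect it to be exactly the $L^\infty$-extension of $\Hbracket{y}{f_1 * z} = \Hbracket{f_1^* * y}{z}$. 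By (3'a) we obtain $\check f * y = 0$ for every $y \in L^1(\M)$. Pairing with $x \in L^1(\M)$ gives $\int_G \check f(s)\, \tau(x \alpha_s(y)) \dif s = 0$. Now take $y = a_1^*$, so that $s \mapsto \tau(x\alpha_s(a_1^*)) = \Gbracket{x}{a_1}(s)$. Since $a_1$ satisfies (2a), these functions are dense in $L^1(G)$, so $\check f = 0$ and hence $f = 0$. This proves (2b) for $a$. Only $a_1$ is used here; $a_2$ serves the symmetric implication (3) $\implies$ (2').

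\emph{(iii).} By Hahn--Banach, take $\varphi \in L^\infty(G)$ that annihilates all left translates of $\Gbracket{a}{a}$. Using the weak formula, this means
\[
  0 = \int_G \varphi(s)\, \tau\big(\alpha_t(a)\alpha_s(a^*)\big) \dif s
    = \tau\big(\alpha_t(a)(\varphi * a^*)\big)
    = \tau\big(a\,\alpha_{t^{-1}}(\varphi * a^*)\big)
\]
for all $t \in G$. This says $\Gbracket{a}{(\varphi * a^*)^*} = 0$ with $(\varphi * a^*)^* \in \M$. By (3a) we get $(\varphi*a^*)^* = \overline{\varphi} * a = 0$, and then (2b) gives $\varphi = 0$, which is (1).

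The main obstacle I anticipate is bookkeeping rather than ideas: verifying that the identities ($\lambda_t$-equivariance, $(f*y)^* = \overline f * y^*$, the adjoint relation in (ii), and associativity) remain valid for the extended convolutions with $f \in L^\infty$ and $y \in L^1(\M)$. I would handle this by proving each identity on $\mathfrak m_\tau$ and passing to limits, using weak$^*$-continuity in $f$ and norm continuity in $y$ as given by \eqref{eq:bracket-continuous}.
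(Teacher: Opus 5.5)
Your proof is correct. Parts (i) and (ii) follow the paper.

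\textbf{Parts (i) and (ii).} For (2) $\Rightarrow$ (3') the paper also applies \cref{thm:associativity} and evaluates at $e$. For (3') $\Rightarrow$ (2) it uses the identity $\Hbracket{b}{f*a}=\Hbracket{f^* * b}{a}$, which is exactly your computation, since $f^*=\check f$ for unimodular $G$. Your final step in (ii) is slightly cleaner than the paper's. The paper concludes $f^*=0$ from $f^* * \Gbracket{b}{a_0}=0$ on a dense set, which tacitly needs continuity and an approximate-identity argument. You instead pair $\check f * a_1^*=0$ against $L^1(\M)$ and use $L^1(G)^*=L^\infty(G)$ directly.

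\textbf{Part (iii).} Here you genuinely diverge.
\begin{itemize}
\item The paper argues in the primal picture from (2a) and (3b). It approximates $f\in L^1(G)$ by some $\Gbracket{x}{a}$, then approximates $x$ in $L^1(\M)$ by a combination of the $\alpha_g(a)$. It transfers the estimate with $\|\Gbracket{u}{a}\|_1\le\|u\|_1\|a\|_1$ and $\Gbracket{\alpha_g(a)}{a}=\lambda_g\Gbracket{a}{a}$.
\item You argue dually from (2b) and (3a). An annihilator $\varphi\in L^\infty(G)$ of the translates gives $\Gbracket{a}{(\varphi*a^*)^*}=0$, hence $\varphi*a^*=0$ and so $\varphi=0$.
\end{itemize}
The paper's route is more elementary: it needs only $L^1$-continuity of the bracket and never uses convolutions with $L^\infty$-functions. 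Yours avoids the $\varepsilon$-bookkeeping. The cost is that you need the extended weak formula, $\tau$-invariance on $L^1(\M)\times\M$, and $(\varphi*a^*)^*=\overline{\varphi}*a$ for $\varphi\in L^\infty(G)$. All three can be checked on $\mathfrak m_\tau$ and pass to $L^1$-limits, as you propose.

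\textbf{A caution on the bookkeeping.} You cannot get associativity with $y\in\M$ (exponents $(1,\infty,1,\infty)$) by norm-approximating $y$ from $\mathfrak m_\tau$, because $\mathfrak m_\tau$ is not norm dense in $\M$. Instead, approximate $y$ boundedly and $\sigma$-weakly: use Kaplansky density, with sequences available by separability of the predual. Then apply dominated convergence in the pairings against $L^1(\M)$. The paper's own proof of \cref{thm:associativity} passes over the same point, and the paper uses the theorem here exactly as you do.
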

\begin{proof}
    (1) to (2a): We find that already a subset of $\Gbracket{L^1(\M)}{a}$ is dense in $L^1(G)$: Namely \[
        \Gbracket{ \span\{ \alpha_s(a) \mid s \in G \} }{a}
        = \span\{ \Gbracket{ \alpha_s(a)}{a} \mid s \in G \}
        = \span\{ \lambda_s(\Gbracket{a}{a}) \mid s \in G \}
    \] is dense in $L^1(G)$ by the assumption. The implication (1') to (2'a) is analogous.

    (2b) to (3'a): Suppose $\Hbracket{y}{a} = 0$ for some $y \in \M$. Then for all $x \in L^1(\M)$ by \cref{thm:associativity}, \[
        0 = x * \Hbracket{y}{a} = \Gbracket{x}{y} * a.
    \] By (2b) this implies $0 = \Gbracket{x}{y} \in C_b(G) \subseteq L^\infty(G)$ for all $x \in L^1(\M)$. By continuity we may evaluate $0 = \Gbracket{x}{y}(e) = \tau(x y^*)$ for all $x \in L^1(\M)$. Hence $y = 0$. The implication (2'b) to (3a) is analogous.

    (3'a) to (2b) under the assumption that some $a_0 \in L^1(\M)$ satisfies (2a): Suppose $f \in L^\infty(G)$ satisfies $f * a = 0$. Then for all $b \in L^1(\M)$ 
    we find \[
        0 = \Hbracket{b}{f * a} = \Hbracket{f^* * b}{a}.
    \] Using (3'a) this implies $f^* * b = 0$ for all $b \in L^1(\M)$. Since $\Gbracket{L^1(\M)}{a_0} \subseteq L^1(G)$ is dense, \[
        0 = \Gbracket{f^* * b}{a_0} 
        = f^* * \Gbracket{b}{a_0}
    \] forces $f^* = 0$. The implication (3a) to (2'b) is analogous.

    (2a) and (3b) together imply (1): Let $f \in L^1(G)$ and $\eps > 0$ be arbitrary, and use (2a) to find $x \in L^1(\M)$ such that $\|\Gbracket{x}{a} - f\|_1 < \eps$. By (3b) we find $g_i \in G$ and $c_i \in \C$ such that $\|\sum c_i \alpha_{g_i}(a) - x\|_1 < \eps / \|a\|_1$. Since \[
        \|\sum c_i \Gbracket{\alpha_{g_i}(a)}{a} - \Gbracket{x}{a}\|_1
        \leq \|\sum c_i \alpha_{g_i}(a) - x\|_1 \|a\|_1
        < \eps,
    \] it holds \[
        \| \sum c_i \Gbracket{\alpha_{g_i}(a)}{a} - f \|_1 
        < 2\eps.
    \] The implication of (2'a) and (3'b) together to (1') is analogous.
\end{proof}

We have to refine the notion of \emph{regularity}, introduced in \cite{KiLaSchuWer12}, to avoid ambiguity. An element $a \in L^1(\M)$ we call \emph{$\alpha$-regular} if the set of translates $\alpha_g(a), g \in G$ spans a dense subset of $L^1(\M)$. We use the same language for $\beta$ on $H$ instead of $\alpha$ on $G$. Similarly, a function $f \in L^1(G)$ we call \emph{$\lambda$-regular} if the set of translates $\lambda_g(f), g \in G$ spans a dense subset of $L^1(G)$. We use the same language with $\rho$ for functions on $H$ instead of $\lambda$ for functions on $G$.

We combine statements $(n)$ and $(n')$ for $n \in \{1,2,3\}$ as follows:

\begin{theorem}\label{thm:regularity}
    Suppose there exists $a_1, a_2 \in L^1(\M)$ such that $\Gbracket{L^1(\M)}{a_1} \subseteq L^1(G)$ is dense and $\Hbracket{a_2}{L^1(\M)} \subseteq L^1(H)$ is dense. Then for all $a \in L^1(\M)$ the following are equivalent: \begin{enumerate}
        \item[(i)] It holds that $\Gbracket{a}{a} \in L^1(G)$ is $\lambda$-regular and that $\Hbracket{a}{a} \in L^1(H)$ is $\rho$-regular.
        \item[(ii)] Both the sets $\Gbracket{L^1(\M)}{a} \subseteq L^1(G)$ and $\Hbracket{a}{L^1(\M)} \subseteq L^1(H)$ are dense.
        \item[(iii)] The operator $a \in L^1(\M)$ is both $\alpha$-regular and $\beta$-regular. 
    \end{enumerate}
\end{theorem}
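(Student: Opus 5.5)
The plan is to reduce everything to \cref{lemma:regularity-implications}, after translating (i), (ii), (iii) into the numbered statements. Statement (i) of the theorem is exactly (1) together with (1'). Statement (ii) is (2a) together with (2'a); by the Hahn--Banach equivalences recorded before the lemma, this is the same as (2) and (2'). Statement (iii) is (3b) together with (3'b), hence the same as (3) and (3'). The hypothesis on $a_1, a_2$ says that (2a) holds for $a_1$ and (2'a) holds for $a_2$. So (2) holds for $a_1$ and (2') holds for $a_2$, and both elements are nonzero, since otherwise the brackets would vanish and could not be dense. This is exactly the standing assumption of part (ii) of \cref{lemma:regularity-implications}.

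The implications then go as follows. For (i) $\implies$ (ii): part (i) of the lemma gives (1) $\implies$ (2) and (1') $\implies$ (2'). For (ii) $\implies$ (iii): part (i) of the lemma gives (2) $\implies$ (3') and (2') $\implies$ (3), so (2) and (2') together yield (3) and (3'). For (iii) $\implies$ (ii): part (ii) of the lemma, with the auxiliary elements $a_1, a_2$, gives (3') $\implies$ (2) and (3) $\implies$ (2'). For (ii) together with (iii) $\implies$ (i): part (iii) of the lemma gives ((2) and (3)) $\implies$ (1) and ((2') and (3')) $\implies$ (1'). Since (ii) already implies (iii), condition (ii) alone yields (i). This closes the cycle (i) $\Rightarrow$ (ii) $\Leftrightarrow$ (iii) $\Rightarrow$ (i).

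The only point needing care is the bookkeeping in part (ii) of the lemma. Its proof of (3') $\implies$ (2) uses an element $a_0$ satisfying (2a), which is $a_1$ here. The symmetric implication (3) $\implies$ (2') uses an element satisfying (2'a), which is $a_2$. The fact that one auxiliary element is needed on each side is the reason the theorem assumes both $a_1$ and $a_2$. I do not expect a genuine obstacle, because all the analytic work (the associativity from \cref{thm:associativity} and the Hahn--Banach dualities) is already contained in the lemma. The proof is therefore a short diagram chase, and the normalization $d_\alpha = d_\beta = 1$ only enters through the lemma.
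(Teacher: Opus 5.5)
Your proposal is correct and matches the paper's proof: you identify (i), (ii), (iii) with the pairs (1) and (1'), (2) and (2'), (3) and (3'), then chase through the three parts of \cref{lemma:regularity-implications}, using $a_1$ and $a_2$ exactly where part (ii) of that lemma needs them. The paper gives the same argument, showing (i) $\iff$ (ii) $\implies$ (iii) in general and (iii) $\implies$ (ii) when elements satisfying (2) and (2') exist.
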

\begin{proof}
    In \cref{lemma:regularity-implications} we have seen that in general it holds $(i) \iff (ii) \implies (iii)$, since \[
        ((1)\text{ and }(1')) \implies ((2)\text{ and }(2')) \implies ((3)\text{ and }(3')),
    \]
    \[
        ((2)\text{ and }(2')) \implies ((2)\text{ and }(2') \text{ and } (3)\text{ and }(3')) \implies ((1)\text{ and }(1')).
    \] Furthermore, if there exist some elements satisfying $(2)$ and $(2')$, then \[
        ((3)\text{ and }(3')) \implies ((2)\text{ and }(2')).
    \]
\end{proof}

In the case of quantum harmonic analysis with the action given through conjugation with the irreducible, projective Weyl-representation of a position-momentum phase-space, it can be shown that there exists a regular element satisfying $(1)$ and $(1')$, and then \cref{thm:regularity} recovers \cite[Proposition 1]{KiLaSchuWer12}. We do not formulate statements regarding \emph{$p$-regularity}, but they can be shown by a similar line of arguments.

We want to remark that also \cite[Theorem 5.29]{Ha23} with $p = 1$ states a special case of the above, where we however believe to be a gap in their proof, due to their lack of the operator-convolution associativity. We crucially had to use \cref{thm:associativity}.

\subsection{A Tauberian theorem}\label{sect:tauberian}

As an application of the equivalent formulations of regularity of the previous chapter, one may show an adaptation of a classical theorem of Wiener to bounded operators as seen in \cite[Theorem 5.1]{LuSk21}. 
For this section we again fix two commuting, $\tau$-preserving actions $\alpha,\beta$ of unimodular groups $G,H$ on a common von Neumann algebra $\M$. We use the same notation as in \cite{LuSk21} and denote \[
    W = \{ a \in L^1(\M) \mid a \text{ is $\alpha$-regular and $\beta$-regular} \}.
\]
Also, we want to assume throughout that there exists operators $a_1, a_2 \in L^1(\M)$, such that the equivalences of \cref{thm:regularity} hold for all $a \in L^1(\M)$, so in particular $W \neq \emptyset$.

\begin{lemma}
    It holds that $\Gbracket{W}{W} * W \subseteq W$.
\end{lemma}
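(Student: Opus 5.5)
Fix $x,y,z \in W$ and put $w = \Gbracket{x}{y} * z$. By the bound \eqref{eq:bracket-continuous}, $\Gbracket{x}{y} \in L^1(G)$, so $w \in L^1(\M)$ is a legitimate candidate. By \cref{thm:regularity} it suffices to check condition (ii) for $w$: that $\Gbracket{L^1(\M)}{w}$ is dense in $L^1(G)$ and $\Hbracket{w}{L^1(\M)}$ is dense in $L^1(H)$. Equivalently, by the Hahn--Banach dualities behind \cref{lemma:regularity-implications}, I will verify (2b) and (2'b) for $w$.

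\emph{Right side, (2'b).} Let $f \in L^\infty(H)$ with $w * f = 0$.
\begin{enumerate}
\item By the commuting actions, $w * f = \Gbracket{x}{y} * (z * f)$.
\item By \cref{thm:associativity} (with $d_\alpha = d_\beta = 1$), this equals $x * \Hbracket{y}{z * f}$.
\item By the bracket rules this is $x * (\Hbracket{y}{z} * f)$, which I rewrite as $x * g$ with $g = \Hbracket{y}{z} * f$.
\end{enumerate}
Since $x \in W$, $x$ satisfies (2'), so $a \mapsto a * g$ is injective on $L^\infty(H)$, giving $g = 0$. Thus $\Hbracket{y}{z * f} = 0$. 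Since $y \in W$ satisfies (3a) up to the adjoint convention, and $z * f \in \M$ (up to taking adjoints/involution appropriately), we get $z * f = 0$. Finally, $z \in W$ satisfies (2'b), so $f = 0$.

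\emph{Left side, (2b).} I expect a symmetric argument. First rewrite $w$ as $w = x * \Hbracket{y}{z}$ via \cref{thm:associativity}. If $f * w = 0$ for $f \in L^\infty(G)$, then
\[
f * w = (f * x) * \Hbracket{y}{z} = \Gbracket{f * x}{y} * z = (f * \Gbracket{x}{y}) * z .
\]
Then (2b) for $z$ gives $f * \Gbracket{x}{y} = \Gbracket{f*x}{y} = 0$. Next, (3'a) for $y$, in the form of injectivity of $u \mapsto \Hbracket{u}{y}$ or its $\Gbracket{\,\cdot\,}{y}$ analogue, should force $f * x = 0$, and then (2b) for $x$ gives $f = 0$.

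\emph{Main obstacle.} The delicate step is the middle one in each chain. Injectivity statement (3a) concerns $y' \mapsto \Gbracket{a}{y'}$ with $a$ in the first slot, whereas here the regular element sits in the other slot, e.g.\ $\Gbracket{f*x}{y}$. I plan to handle this using the conjugate symmetry $\Gbracket{u}{v}(s) = \overline{\Gbracket{v}{u}(s^{-1})}$, which follows from $\tau$-invariance, together with the fact that $W$ is closed under adjoints. Regularity is preserved by $a \mapsto a^*$ because $\alpha_s(a)^* = \alpha_s(a^*)$ and the adjoint is isometric on $L^1$, so the needed injectivity holds for $y^*$ in the correct slot.

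A second technical point is that $f*x$ with $f \in L^\infty(G)$ lies in $\M$ (via the $q=\infty$ extension), while the associativity \cref{thm:associativity} is only stated for $1 \le a,b,c \le r \le \infty$. I will check that the exponent choice $(a,b,c,r) = (1,1,\infty,\infty)$ and its variants are admissible. If necessary, I will instead pair against test elements $v \in L^1(\M)$ and apply \cref{lemma:associativity}-style Fubini identities directly.
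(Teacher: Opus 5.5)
Your argument is correct. It is essentially the Hahn--Banach dual of the paper's proof.

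\textbf{Two small corrections.}

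First, the labels in the middle step of each chain are swapped. On the $H$-side you need (3'a) for $y$, i.e.\ injectivity of $u\mapsto\Hbracket{u}{y}$, combined with $\Hbracket{y}{u}(t)=\overline{\Hbracket{u}{y}(t^{-1})}$. On the $G$-side you need (3a) for $y$, combined with $\Gbracket{u}{y}(s)=\overline{\Gbracket{y}{u}(s^{-1})}$. Both identities follow from $\overline{\tau(a)}=\tau(a^*)$ and invariance of $\tau$, so closedness of $W$ under adjoints is never needed.

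Second, the detours through \cref{thm:associativity} with an entry in $\M$ are avoidable. On the $G$-side, $f*w=(f*\Gbracket{x}{y})*z$ follows directly from the definition of $w$ and the module rule. On the $H$-side, $w*f=x*(\Hbracket{y}{z}*f)$ follows from $w=x*\Hbracket{y}{z}$, which uses associativity only for $L^1$ entries. This is worth doing: the paper's proof of \cref{thm:associativity} approximates each entry in norm by elements of $\mathfrak m_\tau$, and this does not literally cover an $L^\infty$ entry, since $\mathfrak m_\tau$ is in general not norm dense in $\M$. Your weak-$*$ fallback would be needed there.

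\textbf{Comparison with the paper.} The paper proves (iii) for $w$ directly. It writes $\operatorname{span}\{\alpha_g(w)\}=\Gbracket{\operatorname{span}\{\alpha_g(x)\}}{y}*z$. By continuity and $\alpha$-regularity of $x$, the closure of this span contains $\Gbracket{L^1(\M)}{y}*z$. Since $\Gbracket{L^1(\M)}{y}$ is dense ($y\in W$), the closure therefore contains $L^1(G)*z$, which is dense because $z$ is $\alpha$-regular. The $\beta$-side is symmetric via $w=x*\Hbracket{y}{z}$.

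So the roles are exactly exchanged. The paper uses the regularity (3), (3') of $x$ and $z$ and the density property (2), (2') of $y$. You use the injectivity (2b), (2'b) of $x$ and $z$ and the injectivity (3), (3') of $y$.

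The paper's version stays in $L^1$ and is shorter. Yours is more bookkeeping-heavy:
\begin{itemize}
\item it needs the $L^\infty$ versions of the module and bracket identities;
\item it needs the implication (ii)$\Rightarrow$(iii) of \cref{thm:regularity} for $w$, whose proof in \cref{lemma:regularity-implications} again uses \cref{thm:associativity} with an entry in $\M$.
\end{itemize}
It is nonetheless a valid proof within the paper's framework.
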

\begin{proof}
    Both $L^1(\M) \to L^1(G), x \mapsto \Gbracket{x}{w}$ and $L^1(G) \to L^1(\M), f \mapsto f * w$ are continuous for $w \in L^1(\M)$. So for regular $x,y,z \in L^1(\M)$ the claim follows from the density in $L^1(\M)$ of \[
        \operatorname{span}\{ \alpha_g( \Gbracket{x}{y} * z ) \mid g \in G \}
        = \Gbracket{ \operatorname{span}\{ \alpha_g(x) \mid g \in G \} }{y} * z.
    \] Since $\Gbracket{x}{y} * z = x * \Hbracket{y}{z}$, we can use the analogous argument for $\beta$ translates.
\end{proof}

For locally compact Hausdorff spaces $X$, recall that $C_0(X)$ denotes the set of continuous functions on $X$ that vanish at infinity. Recall that $C_0(X)$ is $\|\cdot\|_\infty$-norm-closed inside $C_b(X)$. This means in particular, that for $f \in C_b(X)$ the following holds: If for every $\epsilon > 0$ there exist some $h_\epsilon \in C_0(X)$ such that $\|f - h_\epsilon\|_\infty < \epsilon$, then it follows that $f \in C_0(X)$.

Consider \[
    K = \overline{C_0(G) * L^1(\M)}^{\|\cdot\|_\M} \subseteq \M. 
\] 
Note that for $\M = L^\infty(G), \tau = \int_G \,\cdot\, \dif m$ and $\alpha = \lambda$ we recover $K = C_0(G)$, since in that setting $L^1(\M) = L^1(G)$ leaves $C_0(G)$ invariant under convolution and admits approximate identities. For $\M = B(\mathcal H), \tau = \operatorname{Tr}$ and $\alpha$ conjugation by the phase space representation of quantum harmonic analysis, it is contained in the compact operators \cite[Lemma 2.3]{LuSk21}.

\begin{lemma}
    It holds that $\Gbracket{K}{L^1(\M)} \subseteq C_0(G)$.
\end{lemma}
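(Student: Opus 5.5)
The plan is to reduce first to elements of the form $f * x$ with $f \in C_0(G)$ and $x \in L^1(\M)$, and then pass to the norm closure. This reduction uses the closedness of $C_0(G)$ inside $C_b(G)$ recalled above.

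\emph{Reduction.} Fix $y \in L^1(\M)$. For $k \in \M$ the function $\Gbracket{k}{y}(s) = \tau(k\,\alpha_s(y^*))$ is continuous and bounded, with
\[
|\Gbracket{k}{y}(s)| \leq \|k\|_\infty \|y\|_1 .
\]
Hence $k \mapsto \Gbracket{k}{y}$ is a bounded linear map $\M \to C_b(G)$ for the operator norm on $\M$ and the sup-norm on $C_b(G)$. Now take $k \in K$ and $\eps > 0$. Choose a finite sum $h = \sum_i f_i * x_i$ with $f_i \in C_0(G)$, $x_i \in L^1(\M)$ and $\|k - h\|_\infty < \eps/\|y\|_1$. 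Then $\|\Gbracket{k}{y} - \Gbracket{h}{y}\|_\infty < \eps$. It therefore suffices to show $\Gbracket{f*x}{y} \in C_0(G)$ for $f \in C_0(G)$ and $x, y \in L^1(\M)$.

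\emph{The generators.} Here I would use the compatibility \eqref{eq:convolution-bracket-compatibility}, $\Gbracket{f*x}{y} = f * \Gbracket{x}{y}$. It is stated for $f \in L^1(G)$ and $x, y \in \mathfrak m_\tau$, and must be extended to $f \in C_0(G)$ (with $f * x$ in the extended sense, landing in $\M$ by taking $q = \infty$, $p = 1$, $r = \infty$) and to $x, y \in L^1(\M)$. I would verify it by the weak identity $\tau(w(f*x)) = \int_G f(s)\,\tau(w\,\alpha_s(x))\dif s$. This identity holds for $x, w \in \mathfrak m_\tau$, and both sides are continuous in $x \in L^1(\M)$ and in $w$ in the appropriate norms, by \eqref{eq:bracket-continuous} and the bound $d^{1/q'}\|f*y\|_r \leq \|f\|_q\|y\|_p$. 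Substituting $w = \alpha_t(y^*)$ and using $\tau$-invariance and unimodularity, $\Gbracket{f*x}{y}(t)$ becomes the ordinary convolution of $f$ with the function $\Gbracket{x}{y}$, which lies in $L^1(G)$ by the Duflo--Moore theorem.

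\emph{Conclusion and main obstacle.} The convolution of $f \in C_0(G)$ with $g \in L^1(G)$ lies in $C_0(G)$. To see this, approximate $f$ uniformly by $C_c(G)$ and $g$ in $L^1$ by $C_c(G)$: convolutions of compactly supported continuous functions are in $C_c(G)$, and $\|f*g\|_\infty \leq \|f\|_\infty\|g\|_1$ controls the error. So $\Gbracket{f*x}{y} \in C_0(G)$, and the reduction finishes the proof. I expect the main obstacle to be the justification of the compatibility for $f \in C_0(G)$, which is outside $L^1(G)$, via the duality definition. The alternative is to approximate $f$ by $f_n \in C_c(G) \subseteq L^1(G)$ uniformly. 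Then $f_n * x \to f * x$ in $\M$ requires the bound $\|f*x\|_\infty \leq d^{-1}\|f\|_\infty\|x\|_1$, which is exactly the $q = \infty$ case of the extended convolution estimate. With this, the generator case reduces to $f \in C_c(G)$ via the same sup-norm continuity used in the reduction step.
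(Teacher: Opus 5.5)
Your proposal is correct and follows the paper's proof. You approximate $k\in K$ in operator norm by elements $f*x$ with $f\in C_0(G)$, use $\Gbracket{f*x}{y}=f*\Gbracket{x}{y}\in C_0(G)*L^1(G)\subseteq C_0(G)$, and conclude from $\|\Gbracket{k}{y}\|_\infty\le\|k\|_\infty\|y\|_1$ together with the closedness of $C_0(G)$ in $C_b(G)$. The only differences are that you handle finite sums rather than single products, and that you explicitly justify the compatibility for $f\in C_0(G)$ and $x,y\in L^1(\M)$ through the duality definition of $f*x$, which the paper uses without comment.
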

\begin{proof}
    Let $z \in K$ and $y \in L^1(\M)$. If $y = 0$ clearly $\Gbracket{z}{y} = 0 \in C_0(G)$. Otherwise let $\epsilon > 0$. We find $z_\epsilon = f_\epsilon * x_\epsilon \in \M$, where $f_\epsilon \in C_0(G)$ and $x_\epsilon \in L^1(\M)$, such that $\|z - z_\epsilon\|_\infty < \epsilon / \|y\|_1$. Then \[
        h_\epsilon = \Gbracket{z_\epsilon}{y} = f_\epsilon * \Gbracket{x_\epsilon}{y} \in C_0(G) * L^1(G) \subseteq C_0(G).
    \] satisfies $\|\Gbracket{z}{y} - h_\eps\|_\infty = \|\Gbracket{z - z_\eps}{y}\|_\infty \leq \|z - z_\eps\|_\infty \|y\|_1 < \eps$. Then from the before recalled fact, it follows $\Gbracket{z}{y} \in C_0(G)$.
\end{proof}

We are now ready to state a theorem that includes Wiener's classical Tauberian theorem when applied for $\M = L^\infty(G)$, and that mirrors the Tauberian theorem for bounded operators of \cite[Theorem 5.1]{LuSk21} when applied for $\M = B(\H)$. In the general case it is however unclear to the author whether $\Hbracket{W}{W}$ can be a proper subset of all $\rho$-regular functions.

\begin{theorem}
    Let $\alpha,\beta \colon G,H \curvearrowright \M$ as above. Let $x \in \M$. The following are equivalent: \begin{enumerate}
        \item[(i)] There exists some $y \in W$ such that $\Gbracket{x}{y} \in C_0(G)$.
        \item[(ii)] There exists some $f \in \Hbracket{W}{W}$ such that $x * f \in K$.
    \end{enumerate}
    If either of the above hold, then both of the following hold: \begin{enumerate}
        \item[(1)] For all $y \in L^1(\M)$ it holds $\Gbracket{x}{y} \in C_0(G)$.
        \item[(2)] For all $f \in L^1(H)$ it holds $x * f \in K$.
    \end{enumerate}
\end{theorem}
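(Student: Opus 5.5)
Throughout I use the standing normalisation $d_\alpha=d_\beta=1$, so that \cref{thm:associativity} reads $\Gbracket{u}{v}*w = u*\Hbracket{v}{w}$, together with $L^1(H)=L^1(H,\widecheck m_H)$ by unimodularity. The plan is the chain (i) $\Rightarrow$ (1) $\Rightarrow$ (2) $\Rightarrow$ (ii) $\Rightarrow$ (i). The implication (1) $\Rightarrow$ (i) is trivial once $W\neq\emptyset$, and likewise (2) $\Rightarrow$ (ii) once $\Hbracket{W}{W}\neq\emptyset$.

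\textbf{(i) $\Rightarrow$ (1).} Let $y\in W$ with $\Gbracket{x}{y}\in C_0(G)$. Since $y$ is $\beta$-regular, the $\beta$-translates of $y$ span a dense subset of $L^1(\M)$. Each $\Gbracket{x}{\beta_t(y)}$ is a $\lambda$-type translate of $\Gbracket{x}{y}$: using that $\beta$ commutes with $\alpha$ and preserves $\tau$, one gets $\Gbracket{x}{\beta_t(y)}(s)=\tau(\beta_{t^{-1}}(x)\alpha_s(y^*))$. This is not a translate, so I would rather use the $\alpha$-structure. Take $y'=f*y$ instead, giving $\Gbracket{x}{f*y}=\Gbracket{x}{y}*\tilde f$-type expressions, which is right convolution with $L^1(G)$ and preserves $C_0(G)$. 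The cleaner route is associativity. For $u\in L^1(\M)$, compute $\Gbracket{x}{y*h}=\Gbracket{x*h^*}{y}$, which needs $x*h^*$ rather than $x$. So I go through (2): from (i) derive (2) first.

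\textbf{(i) $\Rightarrow$ (2).} For $y,z\in W$ the associativity gives $x*\Hbracket{y}{z}=\Gbracket{x}{y}*z$, which lies in $C_0(G)*L^1(\M)\subseteq K$. Because $z$ is $\beta$-regular, statement (2') holds for $y$ and $z$ by \cref{thm:regularity}: $\Hbracket{y}{L^1(\M)}$ is dense in $L^1(H)$. The map $z\mapsto \Gbracket{x}{y}*z$ sends $L^1(\M)$ into $K$ and is norm continuous into $\M$, so $x*\Hbracket{y}{z}\in K$ for all $z\in L^1(\M)$. By density and the bound $\|x*f\|_\infty\le\|x\|_\infty\|f\|_1$, together with $K$ being closed, we get $x*f\in K$ for all $f\in L^1(H)$. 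This also gives (ii).

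\textbf{(2) $\Rightarrow$ (1).} Let $y\in L^1(\M)$. Since $\Gbracket{L^1(\M)}{a_1}$ is dense, and likewise for elements of $W$, I pick $w\in W$ and approximate using $\Hbracket{w}{\cdot}$, which is dense in $L^1(H)$. The identity I want is $\Gbracket{x*h}{w}=\Gbracket{x}{w*h^*}$. Here $x*h\in K$ by (2), so $\Gbracket{x*h}{w}\in C_0(G)$ by the preceding lemma, giving $\Gbracket{x}{w*h^*}\in C_0(G)$ for all $h\in L^1(H)$. Since $w$ is $\beta$-regular, the elements $w*h^*$ are dense in $L^1(\M)$: choosing $h$ as an approximate identity concentrated near $t^{-1}$ approximates $\beta_t(w)$. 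Finally $\|\Gbracket{x}{u}\|_\infty\le\|x\|_\infty\|u\|_1$ and $C_0(G)$ is closed, so $\Gbracket{x}{y}\in C_0(G)$ for all $y$.

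\textbf{(ii) $\Rightarrow$ (i).} Let $f=\Hbracket{y}{z}$ with $y,z\in W$ and $x*f\in K$. By the same identity as above, $\Gbracket{x}{z*f^*}=\Gbracket{x*f}{z}\in C_0(G)$. It remains to see that $z*f^*\in W$. By associativity, $z*f^*=z*\Hbracket{z}{y}=\Gbracket{z}{z}*y$ up to the bracket symmetry $\Hbracket{y}{z}^*=\Hbracket{z}{y}$, and this lies in $\Gbracket{W}{W}*W\subseteq W$ by the earlier lemma. So (i) holds with $z*f^*$, and the chain closes: (i) $\Rightarrow$ (2) $\Rightarrow$ (1) $\Rightarrow$ (i), and (ii) $\Rightarrow$ (i) $\Rightarrow$ (ii).

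The main obstacles I expect are in the routine identities. One is verifying $\Hbracket{y}{z}^*=\Hbracket{z}{y}$ with the paper's involution on $L^1(H,\widecheck m_H)$. Another is the adjoint relation $\Gbracket{x*h}{w}=\Gbracket{x}{w*h^*}$ for bounded $x$, which follows from the weak definition. A third is the density of $\{w*h\}$ in $L^1(\M)$ for $\beta$-regular $w$, which follows via approximate identities in $L^1(H)$ and strong continuity of $\beta$ on $L^1(\M)$.
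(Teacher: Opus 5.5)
Your argument is correct. Two of its steps coincide with the paper's proof:
\begin{itemize}
\item[(a)] (i)$\Rightarrow$(2): you use associativity $x*\Hbracket{y}{z}=\Gbracket{x}{y}*z\in K$, then density of $\Hbracket{y}{L^1(\M)}$ in $L^1(H)$, then closedness of $K$.
\item[(b)] (ii)$\Rightarrow$(i): the paper sets $a*f^*$ for an arbitrary $a\in W$. Your choice $a=z$ is a special case, and your check $\Hbracket{y}{z}^*=\Hbracket{z}{y}$ is right.
\end{itemize}

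The genuine difference is how you reach (1). The paper proves (i)$\Rightarrow$(1) directly from the $\alpha$-regularity of the witness $y$. Since $L^1(G)*y$ is dense in $L^1(\M)$ and
\[
\Gbracket{x}{f*y}=\Gbracket{x}{y}*f^*\in C_0(G)*L^1(G)\subseteq C_0(G),
\]
the estimate $\|\Gbracket{x}{u}\|_\infty\le\|x\|_\infty\|u\|_1$ finishes the argument. This is exactly the idea you floated and then abandoned in your first paragraph.

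You instead prove (2)$\Rightarrow$(1). You combine the lemma $\Gbracket{K}{L^1(\M)}\subseteq C_0(G)$, the adjoint relation $\Gbracket{x*h}{w}=\Gbracket{x}{w*h^*}$, and the $\beta$-regularity of an auxiliary $w\in W$. This is valid; it is the dense-range version of your (ii)$\Rightarrow$(i) step.

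What each route buys:
\begin{itemize}
\item[(a)] The paper's route to (1) is shorter. It needs only $\alpha$-regularity of one element, with no appeal to the $K$-lemma or to the regularity theorem.
\item[(b)] Your route makes (1) depend on (2). It therefore depends on the density of $\Hbracket{y}{L^1(\M)}$, which uses the standing assumption on $a_1,a_2$. In return it gives the direct implication (2)$\Rightarrow$(1) and treats the $G$- and $H$-sides symmetrically.
\end{itemize}

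Small corrections:
\begin{itemize}
\item[(a)] In (i)$\Rightarrow$(2), density of $\Hbracket{y}{L^1(\M)}$ is statement (2') for $a=y$. It holds because $y\in W$, not because $z$ is $\beta$-regular.
\item[(b)] Since $w*h^*=\int_H\overline{h(t)}\,\beta_t(w)\dif{t}$, approximating $\beta_{t_0}(w)$ requires $h$ concentrated near $t_0$, not near $t_0^{-1}$. This is harmless, because $h\mapsto h^*$ is a bijection of $L^1(H)$.
\item[(c)] The exploratory (i)$\Rightarrow$(1) paragraph should be deleted, since your final chain does not use it.
\end{itemize}
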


\begin{proof}
    For $x = 0$ there is nothing to show, so let $x \neq 0$ throughout.

    (i) to (ii): Suppose $y \in W$ satisfies $\Gbracket{x}{y} \in C_0(G)$. Choose any $a \in W$ and set $f = \Hbracket{y}{a} \in \Hbracket{W}{W}$. It satisfies \[
        x * f = x * \Hbracket{y}{a}
        = \Gbracket{x}{y} * a
        \in C_0(G) * W \subseteq C_0(G) * L^1(M) \subseteq K.
    \]

    (ii) to (i): Suppose $f \in \Hbracket{W}{W}$ satisfies $x * f \in K$. Still it holds $f^* \in \Hbracket{W}{W}$. Choose any $a \in W$ and set $y = a * f^* \in W * \Hbracket{W}{W} = \Gbracket{W}{W} * W \subseteq W$. It satisfies \[
        \Gbracket{x}{y} = \Gbracket{x}{a * f^*}
        = \Gbracket{x * f}{a}
        \in \Gbracket{K}{W} \subseteq \Gbracket{K}{L^1(M)} \subseteq C_0(G).
    \]

    (i) to (1): Suppose $a \in W$ satisfies $\Gbracket{x}{a} \in C_0(G)$. Let $y \in L^1(\M)$ and $\eps > 0$. We know that $L^1(G) * a$ is dense in $L^1(\M)$. We find $f_\eps \in L^1(G)$ such that $\|f_\eps * a - y\|_1 < \eps/\|x\|_\M$. 
    Denote \[
        h_\eps = \Gbracket{x}{f_\eps * a} = \Gbracket{x}{a} * f^*_\eps \in C_0(G) * L^1(G) \subseteq C_0(G).
    \]
    Since is $\Gbracket{x}{y} \in C_b(G)$ the claim follows as \[
        \|h_\eps - \Gbracket{x}{y}\|_\infty
        = \|\Gbracket{x}{f_\eps * a - y}\|_\infty
        \leq \|x\|_\infty \|f_\eps * a - y\|_1 < \eps.
    \]

    (i) to (2): Suppose $y \in W$ satisfies $\Gbracket{x}{y} \in C_0(G)$. Let $f \in L^1(H)$ and $\eps > 0$. We know that $\Hbracket{y}{L^1(\M)}$ is dense in $L^1(H)$. We find $x_\eps \in L^1(\M)$ with $\|f - \Hbracket{y}{x_\eps}\|_1 < \eps / \|x\|_\infty$. Denote \[
        k_\eps = x * \Hbracket{y}{x_\eps} = \Gbracket{x}{y} * x_\eps \in C_0(G) * L^1(\M) \subseteq K.
    \] We verify $x * f \in K$ as \[
        \|x * f - k_\eps\|_\infty = \|x * (f - \Hbracket{y}{x_\eps})\|_\infty \leq \|x\|_\infty \|f - \Hbracket{y}{x_\eps}\|_1 < \eps.
    \]
\end{proof}

\appendix\section[Appendix]{Duflo-Moore assumptions}\label{sect:appendix}

In \cite{EnWe25} it is made clear that the conditions on a $\tau$-preserving action in \cref{thm:duflo-moore} are the correct ones to generalize the theorem of Duflo and Moore \cite{DuMo1976} for representations of locally compact groups for the case $\M = B(\mathcal H)$ with $\tau = \operatorname{Tr}$. Clearly $\tau$-invariance is necessary to even isometrically extend the automorphisms $\alpha$ to the $L^p(\M, \tau)$ spaces. It turns out, that the other two conditions on $\tau$-invariant actions can in general not be weakened:

\begin{proposition}\label{prop:reverse-duflo-moore}
    Let $\alpha$ be a $\tau$-preserving action of a locally compact group $G$ on a von Neumann algebra $\M$ with semifinite trace $\tau$. Then the following are equivalent: \begin{enumerate}
        \item The action $\alpha$ is ergodic and $\tau$-integrable.
        \item There exists a positive, invertible operator $D$ affiliated to $\M$ such that \eqref{eqn:duflo-moore} holds.
    \end{enumerate}
\end{proposition}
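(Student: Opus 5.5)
The direction (1) $\Rightarrow$ (2) is exactly \cref{thm:duflo-moore}, so all the work is in (2) $\Rightarrow$ (1). Assume $D$ is a positive, invertible operator affiliated to $\M$ such that \eqref{eqn:duflo-moore} holds. I would derive the two conditions in turn, $\tau$-integrability first and ergodicity second.

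\emph{Step 1: $\tau$-integrability.}
\begin{itemize}
    \item Since $\tau$ is semifinite, choose a nonzero projection $x=p$ with $\tau(p)<\infty$.
    \item Since $\tau_{D^{-1}}$ is a normal semifinite weight, faithful because $D^{-1}$ is injective, choose a nonzero $y\in\M_+$ with $\tau_{D^{-1}}(y)<\infty$. For instance, take a spectral projection $y=e$ of $D$ on $[\delta,\infty)$ with $\tau(e)<\infty$; then $\tau_{D^{-1}}(e)\le \delta^{-1}\tau(e)$.
    \item By the trace property, $\tau(x^{1/2}\alpha_s(y)x^{1/2})=\tau(x\alpha_s(y))$.
    \item Then \eqref{eqn:duflo-moore} gives $\int_G \tau(x\alpha_s(y))\dif m(s)=\tau(x)\tau_{D^{-1}}(y)<\infty$.
\end{itemize}
So the function in the definition is integrable, and $\alpha$ is $\tau$-integrable.

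\emph{Step 2: ergodicity.} This is the main obstacle. Let $q$ be a projection in the fixed-point algebra $\M^\alpha$. It suffices to show $q\in\{0,1\}$, since a von Neumann algebra generated by its projections and containing only $0,1$ as projections equals $\C1$.

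\begin{itemize}
    \item Pick $x\in L^1(\M)_+$ and $y\in\M_+$ admissible.
    \item Since $q\alpha_s(y)q=\alpha_s(qyq)$, we get $\int_G\tau(qxq\,\alpha_s(y))\dif m=\int_G\tau(x\,\alpha_s(qyq))\dif m$.
    \item Applying \eqref{eqn:duflo-moore} to both sides yields
    \[
        \tau(qxq)\,\tau_{D^{-1}}(y)=\tau(x)\,\tau_{D^{-1}}(qyq).
    \]
    \item Also $\tau_{D^{-1}}(qyq)\le \|q\|\,\tau_{D^{-1}}(y)$, up to the usual care with the regularisations $(D^{-1})_\eps$, so $qyq$ is admissible as well.
\end{itemize}

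Now choose $x$ with $\tau(qxq)=0$ but $\tau(x)>0$, for example $x\le 1-q$ of finite trace, which is possible if $q\ne1$ by semifiniteness of $\tau$ on $(1-q)\M(1-q)$. Then $\tau_{D^{-1}}(qyq)=0$ for every admissible $y$.

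Admissible elements are ultraweakly dense in $\M_+$, because $\tau_{D^{-1}}$ is semifinite. By normality and faithfulness of $\tau_{D^{-1}}$, this forces $qyq=0$ for admissible $y$, and hence $q\M q=0$, i.e.\ $q=0$.

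The delicate points are:
\begin{itemize}
    \item justifying the existence of the test elements $x$ and $y$;
    \item the inequality $\tau_{D^{-1}}(qyq)<\infty$, since $q$ need not commute with $D$. To handle this I would first try to show $D$ commutes with $\M^\alpha$, or avoid the issue by choosing $y=qyq$ directly, taking $y$ admissible inside the corner $q\M q$;
    \item the symmetric choice of $x$ under $q$ and $y$ under $1-q$, which gives the contradiction when $q\neq0,1$.
\end{itemize}

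A cleaner route for the last point: take $x\le q$ and $y\le 1-q$, both nonzero, with $\tau(x)<\infty$ and $\tau_{D^{-1}}(y)<\infty$. Then $\alpha_s(y)\le 1-q$, so $\tau(x\alpha_s(y))=\tau(qxq\,\alpha_s(y))=\tau(x\,q\alpha_s(y)q)=0$ for every $s$. But the right-hand side of \eqref{eqn:duflo-moore} is $\tau(x)\tau_{D^{-1}}(y)>0$ by faithfulness, a contradiction. The existence of such $y$ follows from semifiniteness of the restriction of $\tau_{D^{-1}}$ to the corner $(1-q)\M(1-q)$, for instance by cutting $1-q$ with spectral projections of $(1-q)D^{-1}(1-q)$-type approximants. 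This existence argument is where I expect the real effort lies.
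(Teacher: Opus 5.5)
Your proof is correct, and it reaches ergodicity by a different mechanism than the paper. The paper takes an arbitrary positive $z\in\M^\alpha$ and uses \eqref{eqn:duflo-moore} to show that the weights $\tau_z$ and $c_z\tau$ coincide. It then invokes the uniqueness part of the Pedersen--Takesaki Radon--Nikodym theorem to get $z\in\C1$, and finally splits a general fixed element into positive parts.

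You test only projections $q\in\M^\alpha$, and your first route is already a complete proof. Choose $0\neq x\le 1-q$ of finite trace, which is possible because the restriction of the \emph{trace} $\tau$ to a corner is semifinite. This kills the left side of $\tau(qxq)\tau_{D^{-1}}(y)=\tau(x)\tau_{D^{-1}}(qyq)$. Faithfulness of $\tau_{D^{-1}}$ and $\sigma$-weak density of the admissible elements then force $q=0$.

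Your argument needs only faithfulness of $\tau$ and $\tau_{D^{-1}}$ and the fact that a von Neumann algebra is the closed span of its projections, so it avoids Radon--Nikodym uniqueness entirely. The paper's version instead handles positive fixed elements directly and identifies the scalar. Your Step 1 also supplies details the paper leaves implicit.

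Several of your side remarks are wrong, although none of them is actually used:
\begin{enumerate}
\item In Step 1, a spectral projection of $D$ on $[\delta,\infty)$ need not have finite trace; take $D=1$ and $\tau(1)=\infty$. Semifiniteness of $\tau_{D^{-1}}$ alone suffices.
\item The bound $\tau_{D^{-1}}(qyq)\le\|q\|\,\tau_{D^{-1}}(y)$ is false in general when $q$ does not commute with $D$. It is also unnecessary, because \eqref{eqn:duflo-moore} is an identity in $[0,\infty]$ for every $y\in\M_+$.
\item For the same reason, your ``cleaner route'' does not need any $y\le 1-q$ with $\tau_{D^{-1}}(y)<\infty$. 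This matters, because the justification you sketch is unsound. The restriction of a non-tracial normal semifinite weight to a corner need not be semifinite. For example, take $\operatorname{Tr}(h^{1/2}\,\cdot\,h^{1/2})$ on $B(\H)$ and a rank-one projection onto a vector outside the domain of $h^{1/2}$; the corner weight is $\infty$ on every nonzero positive element. Here semifiniteness on the corner would only become available after first proving that $D$ commutes with $\M^\alpha$.
\end{enumerate}
The fix for the cleaner route is simply to take $y=1-q$. The left side of \eqref{eqn:duflo-moore} is then $\int_G\tau(x(1-q))\dif m=0$. The right side $\tau(x)\tau_{D^{-1}}(1-q)$ is strictly positive by faithfulness, which gives the contradiction.
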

\begin{proof}
    The implication (1) to (2) is \cref{thm:duflo-moore}. Also, (2) implies $\tau$-integrability. We will now assume that some positive, invertible operator $D$ affiliated to $\M$ satisfies \eqref{eqn:duflo-moore}, and show that this implies that $\alpha$ is ergodic:

    We may fix some $y \in \M_+$ for which $\tau_{D^{-1}}(y) = 1$. First, consider some positive $z \in \M_+$ for which $\alpha_s(z) = z$ holds for all $s \in G$. Assuming \eqref{eqn:duflo-moore} to hold, and using $\alpha_s(z^{1/2}) = z^{1/2}, s \in G$, for all positive $x \in L^1(\M)_+$ we compute \[
        \tau(x) \tau_{D^{-1}}(z^{1/2} y z^{1/2})
        = \int_G \tau(x \alpha_s(z^{1/2} y z^{1/2})) \dif s
        = \int_G \tau(z^{1/2} x z^{1/2} \alpha_s(y)) \dif s
        = \tau_{z}(x) < \infty.
    \] The computation shows an equality of semifinite, normal weights \[
        \tau_z = c_z \tau, \quad \text{where } c_z = \tau_{D^{-1}}(z^{1/2} y z^{1/2}) \in \mathbb R_+.
    \] The uniqueness of Radon--Nikodym derivatives implies that $z = c_z 1$ is a scalar.

    Next, consider a more general $z \in \M$ with $\alpha_s(z) = z$ for all $s \in G$. If $z = z^*$ is self-adjoint, then by the spectral calculus also $\alpha_s(|z|) = |z|$ for all $s \in G$, so that both $z_\pm = \frac12(|z| \pm z) \in \M_+$ satisfy $\alpha_s(z_\pm) = z_\pm$ for all $s \in G$, hence $z_\pm \in \mathbb R_+ 1$. So $z = z_+ - z_- \in \mathbb R 1$. If $z$ is not necessarily self-adjoint, still $\operatorname{Re} z = \frac12(z + z^*)$ and $\operatorname{Im} z = \frac1{2i}(z - z^*)$ are self-adjoint and invariant under $\alpha_s$ for all $s \in G$, hence scalar elements $\mathbb R 1 \subseteq \M$, so $z = \operatorname{Re} z + i \operatorname{Im} z \in \mathbb C 1$ is a scalar.
\end{proof}

\printbibliography

\end{document}